\def\blfootnote{\xdef\@thefnmark{}\@footnotetext}
\newcommand\ccnote{
    \blfootnote{\copyright\,\, Luis Silvestre}
    \blfootnote{\ccLogo\, \ccAttribution\,\, Licensed under a \href{https://creativecommons.org/licenses/by/4.0/}{Creative Commons Attribution License (CC-BY)}.}
}
\numberwithin{equation}{section}
\renewcommand{\leq}{\leqslant}
\renewcommand{\geq}{\geqslant}
\renewcommand{\mathbb}{\varmathbb}
\newtheorem{theorem}{Theorem}[section]
\newtheorem{lemma}[theorem]{Lemma}
\newtheorem{corollary}[theorem]{Corollary}
\newtheorem{proposition}[theorem]{Proposition}
\newtheorem{definition}[theorem]{Definition}
\newtheorem{remark}[theorem]{Remark}
\newcommand{\R}{\mathbb R}
\newcommand{\one}{\mathds 1}
\newcommand{\eps}{\varepsilon}
\newcommand{\I}{\mathrm{I}}
\newcommand{\dd} {\; \mathrm{d}}
\newcommand{\dvxt} {\; \mathrm{d}v\mathrm{d}x\mathrm{d}t}
\DeclareMathOperator*{\osc}{osc}
\DeclareMathOperator{\supp}{supp}
\DeclareMathOperator{\dist}{dist}
\DeclareMathOperator{\esssup}{ess \, sup}
\DeclareMathOperator{\essinf}{ess \, inf}
\newcommand{\trp}{(\partial_t + v \cdot \nabla_x)}
\newcommand{\HH}{\mathcal H}
\newcommand{\OO}{\mathcal O}
\newcommand{\Hk}{H^1_{kin}}
\newcommand{\z}{z^0}
\newcommand{\tz}{t^0}
\newcommand{\xz}{x^0}
\newcommand{\vz}{v^0}
\address{Luis Silvestre, University of Chicago, Department of Mathematics.}
\email{luis@math.uchicago.edu}
\begin{document}

\thispagestyle{empty}

\begin{minipage}{0.28\textwidth}
\begin{figure}[H]
\includegraphics[width=2.5cm,height=2.5cm,left]{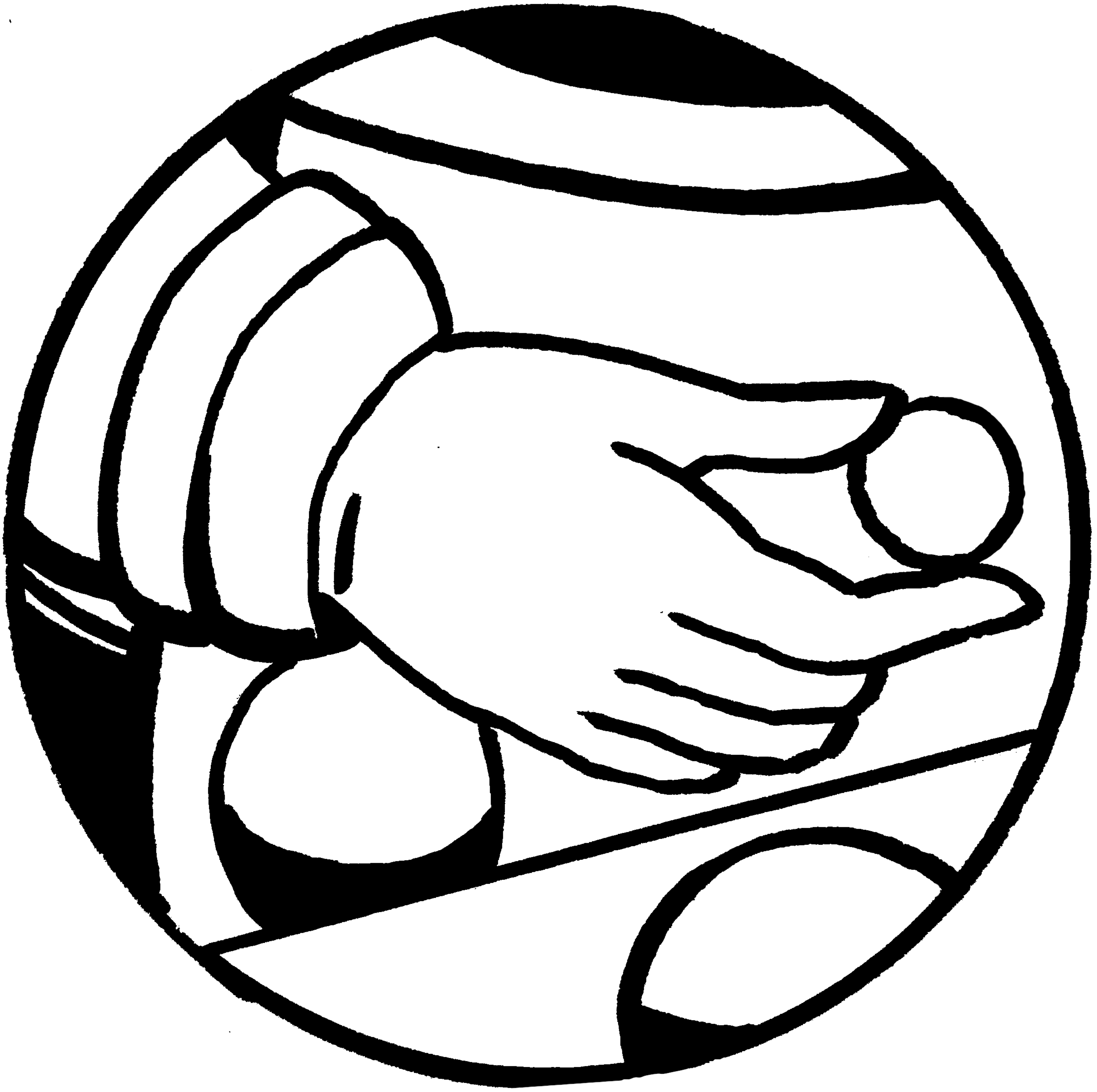}
\end{figure}
\end{minipage}
\begin{minipage}{0.7\textwidth} 
\begin{flushright}
Ars Inveniendi Analytica (2022), Paper No. 6, 29 pp.
\\
DOI 10.15781/nqdd-qs03
\\
ISSN: 2769-8505
\end{flushright}
\end{minipage}

\ccnote

\vspace{1cm}


\begin{center}
\begin{huge}
\textit{H\"older estimates for kinetic Fokker-Planck equations up to the boundary}

\end{huge}
\end{center}

\vspace{1cm}


\begin{center}
\begin{minipage}[t]{.28\textwidth}
\begin{center}
{\large{\bf{Luis Silvestre}}} \\
\vskip0.15cm
\footnotesize{University of Chicago}
\end{center}
\end{minipage}
\end{center}

\vspace{1cm}


\begin{center}
\noindent \em{Communicated by Scott Armstrong}
\end{center}
\vspace{1cm}


\noindent \textbf{Abstract.} \textit{We obtain local H\"older continuity estimates up to the boundary for a kinetic Fokker-Planck equation with rough coefficients, with the prescribed influx boundary condition. Our result extends some recent developments that incorporate De Giorgi methods to kinetic Fokker-Planck equations. We also obtain higher order asymptotic estimates near the incoming part of the boundary. In particular, when the equation has a zero boundary conditions and no source term, we prove that the solution vanishes at infinite order on the incoming part of the boundary.}
\vskip0.3cm

\noindent \textbf{Kinetic equations.} Boundary estimates. H\"older regularity.
\vspace{0.5cm}


\section{Introduction}

The purpose of this paper is to analyze the regularity near the spatial boundary for solutions of kinetic Fokker-Planck equations of the form
\begin{align}
(\partial_t + v\cdot \nabla_x) f - \partial_{v_i} (a_{ij}(t,x,v) \partial_{v_j} f) &= - b_i \partial_{v_i} f + G,  \text{ for } (t,x,v) \in [0,T] \times \Omega \times \R^d, \label{e:kFP} \\
f_{|\gamma_-} &= g. \label{e:influx}
\end{align}
It is a variant of Kolmogorov equation with rough diffusion coefficients. Here, the function $f=f(t,x,v)$ is defined in a domain of the form $[T_1,T_2] \times \Omega \times \R^d$, where $\Omega \subset \R^d$ is some open set with a $C^{1,1}$ boundary. We write $\gamma_-$ to denote the incoming part of the boundary. 

We prescribe the values of the solution $f$ on the incoming part of the boundary as $f=g$ on $\gamma_-$, for some arbitrary given function $g$. This is a common boundary condition for kinetic equations usually called \emph{influx}. It is one of the common physically relevant boundary conditions for kinetic equations. Other common choices are specular reflection and diffuse boundary conditions. The specular reflection boundary condition was essentially analyzed in \cite{guo2020}. We discuss it briefly in Section \ref{s:flattening}.

We say that the coefficients $a_{ij}(t,x,v)$ are uniformly elliptic, with ellipticity parameters $\Lambda$ and $\lambda>0$, if $\lambda \I \leq a_{ij}(t,x,v) \leq \Lambda \I$, for every $(t,x,v)$ in the domain of the equation.

Throughout this paper, we assume that the coefficients $a_{ij}$ are uniformly elliptic, and the drift vector field $b$ and the right-hand side $G$ are in $L^\infty$. 

In recent years, there have been exciting new developments obtaining interior H\"older estimates for kinetic Fokker-Planck equations like \eqref{e:kFP} (see \cite{polidoro2004,zhang2008,wangzhang2009,zhang2011,golse2019harnack,guerand2021log,guerand2021quantitative,anceschi2021note}). Our objective is to extend the H\"older continuity estimates of these results as estimates up to the boundary, in the case of the influx boundary conditions. We provide clean local regularity estimates that hold in sets that may contain parts of the boundary. When the domain $\Omega$ is convex, our estimates are stronger and essentially independent of the shape of $\Omega$.

Regularity results for Kinetic Fokker-Planck equations with variable coefficients have applications to the study of the Landau equation. For example, they play a key role in \cite{henderson2020c} to obtain conditional regularity estimates for the Landau equation with periodic boundary conditions in space. We hope that the results in this paper may help obtain similar regularity results for the Landau equation in bounded domains.

The interior H\"older continuity estimate for kinetic integro-differential equations was developed in \cite{imbert2020weak}. It plays a key role in the program of conditional regularity for the Boltzmann equation (see \cite{imbert2020ems,imbert2022jams}). It would be conceivable that the interior estimates of \cite{imbert2020weak} may be extended up to the boundary for the influx condition, similarly as we do it in this paper for the second order case. We have not pursued that direction yet. 

\subsection{Notation}

We write $\circ$ for the Galilean group operation.
\[ (t_1,x_1,v_1) \circ (t_2,x_2,v_2) = (t_1+t_2, x_1+x_2+t_2 v_1, v_1+v_2).\]
We use $Q_r$ to denote the kinetic cylinders.
\begin{align*}
Q_r &:= (-r^2,0] \times B_{r^3} \times B_r, \\
Q_r(z) &:= z \circ Q_r.
\end{align*}
We use the notation $\HH_r(z)$ for the points in $Q_r(z)$ that are inside the domain $\Omega$.
\[ \HH_r(z) := \{ (t,x,v) \in Q_r(z) : x \in \Omega \}. \]
Our definition for $\HH_r(z)$ depends on $\Omega$, even though it is not shown explicitly by this choice of notation. Observe that $\HH_r(z)$ is a convex subset of $\R^{1+2d}$ when $\Omega$ is a convex set in $\R^d$.

We write $\gamma$ for the points $(t,x,v) \in \R^{1+2d}$ so that $x \in \partial \Omega$. We differentiate three parts of this boundary depending on the direction of the flow: $\gamma = \gamma_- \cup \gamma_0 \cup \gamma_+$. Let $n=n(x)$ be the exterior unit normal vector to $\partial \Omega$ at the point $x$. We write
\begin{align*}
\gamma_- = \{ (t,x,v) \in \R^{1+2d} : x \in \partial \Omega \text{ and } v \cdot n < 0\},\\
\gamma_+ = \{ (t,x,v) \in \R^{1+2d} : x \in \partial \Omega \text{ and } v \cdot n > 0\},\\
\gamma_0 = \{ (t,x,v) \in \R^{1+2d} : x \in \partial \Omega \text{ and } v \cdot n = 0\}.
\end{align*}

\subsection{Main results}

The first main result is a local $L^\infty$ estimate up to the boundary. Its proof is rather short once we interpret the notion of subsolution correctly.

\begin{theorem} \label{t:Linfty}
Let $\z = (\tz,\xz,\vz) \in \R^{1+2d}$ with $\xz \in \overline \Omega$. Assume that $f$ is a nonnegative subsolution of \eqref{e:kFP} with $f=0$ on $\gamma_-$ (in the sense described in Definition \ref{d:weak-sub-sol} with $g=0$).  Assume that $G \in L^\infty(\HH_1(\z))$. Then
\[ \esssup_{\HH_{1/2}(\z)} f \leq C \left( \|f\|_{L^2(\HH_1(\z))} + \|G\|_{L^\infty(\HH_1(\z))}  \right).\]
The constant $C$ depends on dimension and the parameters of the equation, but it does not depend on the domain $\Omega$ or the point $\z$.
\end{theorem}

The notion of weak solution and subsolution will be made explicit in Definitions \ref{d:weak-sol} and \ref{d:weak-sub-sol}. While the definition of solution that we work with is arguably the most natural (and weakest), one could claim that our definition of nonnegative weak solution is more artificial. Theorem \ref{t:Linfty} will typically be applied to the positive or negative part of a solution. We prove in Lemma \ref{c:convex-sub-no-influx} that the positive part of a solution that is negative on $\gamma_-$ is a subsolution that vanishes on $\gamma_-$.

Our second main theorem is the H\"older continuity up to the boundary. Note that the estimate is better when $\Omega$ is convex. In this result, the estimate holds uniformly along the whole boundary $\gamma_+$, $\gamma_-$ and $\gamma_0$.

\begin{theorem} \label{t:Calpha}
There exists an $\alpha>0$ depending on the ellipticity parameters and dimension only so that the following statement holds.

Let $\z = (\tz,\xz,\vz) \in \R^{1+2d}$ with $x_0 \in  \overline\Omega$. Assume that $f$ is a solution of \eqref{e:kFP} (in the sense described in Definition \ref{d:weak-sol}). Then
\[ \|f\|_{C^\alpha(\HH_{1/2}(z^0))} \leq C \left( \|f\|_{L^2(\HH_1(z^0))} + \|g\|_{C^\alpha(\gamma_- \cap Q_1(\z))} + \|G\|_{L^\infty(\HH_1(\z)} \right). \]
The constant $C$ depends on dimension, the ellipticity parameters, $\|b\|_{L^\infty}$, $\Omega$ and $z^0$. However, when $\Omega$ is convex, $C$ and $\alpha$ depend on dimension and the parameters of the equation only (they do not depend on $z^0$ or the domain $\Omega$).
\end{theorem}

The $C^\alpha$ norm in Theorem \ref{t:Calpha} should be understood in terms of the kinetic distance defined in Section \ref{s:kinetic-distance}. If we used the usual Euclidean distance instead, we would still get some H\"older continuity (with a smaller exponent $\alpha$), but the constant $C$ would also depend on the velocity $\vz$.

When applying local regularity estimates to study the global regularity of solutions to the kinetic equations (for example as in \cite{henderson2020c} or \cite{imbert2022jams}), we need to have some control on how these estimates behave for large velocities. To that effect, it is optimal to have local estimates like those for convex domains $\Omega$, where the constants involved are independent of the point $\vz$.

When $\Omega$ is not convex, the pure transport equation $\trp f = 0$, with the influx condition, generates discontinuities. Theorem \ref{t:Calpha} tells us that once we add diffusion in $v$, these discontinuities disappear and the solutions are always $C^{\alpha}$. However, for large velocities, the transport terms are stronger and it is natural to expect that the constant $C \to \infty$ as the tangential component of $\vz$ diverges in some direction where $\Omega$ has a negative curvature.

Our last main result concerns the incoming part of the boundary $\gamma_-$ only. It says that when the equation has a zero source term, and a zero boundary condition, then the solution vanishes to infinite order on $\gamma_-$.

\begin{theorem} \label{t:Cinfty}
Let $\z \in \gamma_-$. Assume that $f$ is a solution of \eqref{e:kFP} (in the sense described in Definition \ref{d:weak-sol}) with $g=0$ and $G=0$. Then, for every exponent $q \in \mathbb N$, there exists an constant $C$ so that for all $r>0$,
\[ \esssup_{\HH_{r}(\z)} f \leq C \|f\|_{L^2(\HH_1(\z))} r^q. \]
The constant $C$ depends on $q$, dimension, the parameters of the equation, $\Omega$ and $\z$. However, when $\Omega$ is convex, it depends on $q$, dimension, the parameters of the equation, and $\vz \cdot n$ only.
\end{theorem}

When there is a nonzero boundary value $g$, or a nonzero source term $G$, a version of Theorem \ref{t:Cinfty} holds for a restricted range of exponents $q$. If we have a boundary value $g$ that is only H\"older continuous with exponent $\alpha$, the exponent $q$ cannot be taken larger than $\alpha$. If we have a bounded source term $G \in L^\infty$, the exponent $q$ cannot be taken larger than $2$. We explore these and other variants of Theorem \ref{t:Cinfty} by the end of section \ref{s:Cinfty}.

It is not common to have an infinite order of vanishing near the boundary of a partial differential equation. Theorem \ref{t:Cinfty} is quite unusual in this respect. If we consider for example the Laplace equation in a smooth domain with a Dirichlet boundary condition, having the boundary value equal to zero in some open set would not imply that the solution vanishes at infinite order there. A solution to the heat equation starting from an initial data that equals zero in some open set will vanish at infinite order at its initial time. This is also true for general parabolic equations with measurable (uniformly elliptic) coefficients and follows from the well known Gaussian upper bounds for their fundamental solutions.\footnote{We thank Chris Henderson and an anonymous referee for pointing this out.} Our setting in this paper is comparable to the latter since the diffusion in \eqref{e:kFP} is parallel to the boundary while the drift is transversal. However, the proof we provide for Theorem \ref{t:Cinfty} follows a completely independent path based on the geometry of kinetic cylinders.

\medskip

When we were writing this paper, we learned about the very recent work \cite{zhu2022regularity}. In that paper Yuzhe Zhu analyzes the well posedness and boundary regularity for a Fokker-Planck equation like \eqref{e:kFP} with the three most common boundary conditions: prescribed influx, specular reflection, and diffuse. While the estimates provided in the main theorem of \cite{zhu2022regularity} are global, one can also find local estimates in the body of the paper (see Remark 1.2 and Proposition 3.7 in \cite{zhu2022regularity}) that are similar to our Theorem \ref{t:Calpha} in the non-convex case. In \cite{zhu2022regularity}, there is no attention to the convexity of the domain $\Omega$. The domain is initially flattened with the same change of variables we describe in Section \ref{s:flattening}. All the local estimates obtained in \cite{zhu2022regularity} depend on $\vz$ and the curvature of the boundary. There is no special analysis focusing near $\gamma_-$, like in our Theorem \ref{t:Cinfty}. The main theorems in \cite{zhu2022regularity} contain other issues that are outside the scope of this paper, like the existence of solutions, and global estimates for diffuse boundary conditions.

In the appendix of \cite{zhu2022regularity} there is an example showing that solutions to the equation \eqref{e:kFP} are not in general differentiable on $\gamma_0$, even when the diffusion coefficients $a_{ij}$ are constant and the solution is stationary. This example shows that a naive higher order regularity estimate (as in Schauder estimates) may not hold in this setting.

\subsection{Additional notation}
When we integrate a quantity over some portion of $\gamma$, we write $\dd \gamma$ to denote $\dd v \dd S(x) \dd t$. Here, $\dd S(x)$ is the differential of surface for $x \in \partial \Omega$.

For any open subset $D \subset \R^d$, we sometimes write $L^2_{t,x} H^1_v$ or $L^2_{t,x} H^{-1}_v$ to denote the Sobolev spaces and their norms taking into account derivatives with respect to the velocity variable only. That is,
\begin{align*}
\|f\|_{L^2_{t,x} H^1_v(D)}^2 &:= \|f\|_{L^2(D)}^2 + \|\nabla_v f\|_{L^2(D)}^2,\\
\|g\|_{L^2_{t,x} H^{-1}_v(D)}^2 &:= \sup \left\{ \int_D g \varphi \dvxt : \varphi \in C^1_c(D) \text{ with } \|\varphi\|_{L^2_{t,x} H^1_v(D)} \leq 1 \right\}.
\end{align*}

\subsection*{Acknowledgment}

Luis Silvestre is supported by NSF grants 2054888 and 1764285.

\section{Galilean invariance and convolutions}

While the class of equations \eqref{e:kFP} is invariant by translations in time and space, we cannot translate a solution $f$ in velocity and expect it to solve an equation of the same kind. Indeed, the \emph{transport} term $(\partial_t + v \cdot \nabla_x) f$ has a coefficient that depends on $v$. The correct group of transformations to associate with this class of equations is the group of inertial changes of variables. We define the following group operator in $\R^{1+2d}$.
\[ (s,y,w) \circ (t,x,v) = (s+t, x+y+tw, v+w).\]
The operator $\circ$ defines a non-commutative Lie group structure in $\R^{1+2d}$ sometimes called \emph{Galilean group}.

We may reconsider the differential operators $(\partial_t + v \cdot \nabla_x)$, $\nabla_v$ and $\nabla_x$ in terms of this operation. They turn out to be the natural differential operators that arise from the right action of the group.
\begin{align*}
(\partial_t + v \cdot \nabla_x) f(t,x,v) &= \lim_{h \to 0} \frac{f((t,x,v)\circ(h,0,0)) - f(t,x,v)} h , \\
\partial_{v_i} f(t,x,v) &= \lim_{h \to 0} \frac{f((t,x,v)\circ(0,0,h e_i)) - f(t,x,v)} h , \\
\partial_{x_i} f(t,x,v) &= \lim_{h \to 0} \frac{f((t,x,v)\circ(0,h e_i,0)) - f(t,x,v)} h.
\end{align*}

We can immediately verify that these differential operators are \emph{left}-invariant by the action of the group: if $z_0 \in \R^{1+2d}$ and we define $\tau_{z_0} f(z) = f(z_0 \circ z)$, then $\tau_{z_0} (\partial_t + v \cdot \nabla_x) f = (\partial_t + v \cdot \nabla_x) \tau_{z_0} f$, $\tau_{z_0} \nabla_x f = \nabla_x \tau_{z_0} f$, and $\tau_{z_0} \nabla_v f = \nabla_v \tau_{z_0} f$.

The differential operators arising from the left action of the group are right-invariant, but not necessarily left-invariant. They are
\begin{align*}
\partial_t f(t,x,v) &= \lim_{h \to 0} \frac{f((h,0,0) \circ (t,x,v)) - f(t,x,v)} h , \\
(\partial_{v_i} + t \partial_{x_i}) f(t,x,v) &= \lim_{h \to 0} \frac{f((0,0,h e_i) \circ (t,x,v)) - f(t,x,v)} h , \\
\partial_{x_i} f(t,x,v) &= \lim_{h \to 0} \frac{f((0,h e_i,0) \circ (t,x,v)) - f(t,x,v)} h.
\end{align*}

The equation \eqref{e:kFP} involves the transport operator $(\partial_t + v \cdot \nabla_x)$ and derivatives in velocity $\partial_{v_i}$. The Galilean group, is the natural group of transformations in $\R^{1+2d}$ that is used throughout the paper.

We define the convolution of functions in terms of the Galilean group.
\[ f \ast g (z) := \int_{\R^{1+2s}} f(\omega) g(\omega^{-1} \circ z) \dd \omega. \]
This convolution is associative, but it is not commutative. If we make the change of variables $\omega^{-1} z \mapsto \omega$, we obtain the equivalent expression
\[ f \ast g (z) := \int_{\R^{1+2s}} f(z \circ \omega^{-1}) g(\omega) \dd \omega. \]
Throughout this paper, whenever we write a convolution, we mean this convolution with respect to the Galilean group (we basically give up the usual group structure of $\R^{1+2d}$ altogether in favor of the Galilean group). The convolution with respect to the Galilean group has very similar properties as the usual convolution, provided that we are careful as to whether operations apply from the left or from the right in every case.

By a direct computation, we can easily verify that the left-invariant differential operators can be thrown into the second factor:
\begin{align*}
\trp (g\ast f)(t,x,v) &=g \ast  (\partial_t + v \cdot \nabla_x) f (t,x,v),\\
\partial_{v_i} (g\ast f) (t,x,v) &= g \ast  \partial_{v_i} f (t,x,v), \\
\partial_{x_i} (g\ast f) (t,x,v) &= g \ast \partial_{x_i} f (t,x,v) .
\end{align*}
Conversely, the right-invariant differential operators can be transferred into the first factor:
\begin{align*}
\partial_t (g\ast f)(t,x,v) &=[\partial_t g] \ast f (t,x,v),\\
(\partial_{v_i} + t \partial_{x_i}) (g\ast f) (t,x,v) &= [(\partial_{v_i} + t \partial_{x_i}) g] \ast  f (t,x,v), \\
\partial_{x_i} (g\ast f) (t,x,v) &= [\partial_{x_i} g] \ast f (t,x,v) .
\end{align*}
Note that $f \ast g$ is $C^\infty$ provided that at least one of the two functions is $C^\infty$. Moreover
\[ \int (g \ast f)(z) \varphi(z) \dd z = \int f(z) (\hat g \ast \varphi)(z) \dd z,\]
where $\hat g(z) = g(z^{-1})$.

Convolving with an appropriately scaled family of mollifiers gives us a convenient smooth approximation for rough functions $f$. We use it in Section \ref{s:sobolev}, for technical manipulations of functions in kinetic Sobolev spaces.



Note that while the family of kinetic Fokker-Planck equations \eqref{e:kFP} is invariant by Galilean translations, the boundary condition \eqref{e:influx} is not. The spatial domain $\{x \in \Omega\}$ would become an oblique domain after a translation $z \mapsto \z \circ z$ if $\vz \neq 0$. So, the assumptions in Theorems \ref{t:Linfty}, \ref{t:Calpha} and \ref{t:Cinfty} cannot be easily reduced to $\z = 0$ by a translation. The parameters in the estimates may depend on $\vz$, and in fact the constant $C$ in Theorems \ref{t:Calpha} and \ref{t:Cinfty} degenerates for large velocities when $\Omega$ is not convex.

\subsection{Kinetic distance and H\"older norms}

\label{s:kinetic-distance}

The distance in $\R^{1+2d}$ should be appropriately adapted to be homogeneous with respect to the kinetic scaling $S_r$ and left invariant with respect to the left action of the Galilean group. We provide an explicit formula for such a distance following \cite{imbert2020schauder}.

\begin{definition}\label{d:distance}
Given two points $z_1 = (t_1,x_1,v_1)$ and $z_2 = (t_2, x_2, v_2)$ in $\R^{1+2d}$, we define the following distance function
\[ d_\ell(z_1,z_2) := \min_{w \in \R^d} \left\{ \max \left( |t_1-t_2|^{ \frac 1 {2} } , |x_1-x_2-(t_1-t_2)w|^{ \frac 1 {3} } , |v_1-w| , |v_2-w| \right) \right\}.\]
\end{definition}

The subindex ``$\ell$'' stands for \textbf{``l''}eft invariant. It is easy to check that this distance $d_\ell$ satisfies the following two invariances.
\begin{align*}
d_\ell(S_r z_1, S_r z_2) &= r d_\ell(z_1,z_2), \\
d_\ell(z \circ z_1, z\circ z_2) &= d_\ell(z_1,z_2).
\end{align*}

We define the H\"older spaces and their norms using this distance. For $\alpha \in (0,1)$, the (kinetic) $C^\alpha$ norm of a function $f:D \to \R$, for $D \subset \R^{1+2d}$ is given by
\[ [f]_{C^\alpha(D)} = \sup_{z_1 \neq z_2 \in D} \frac{|f(z_1) - f(z_2)|}{d_\ell(z_1,z_2)^\alpha}, \qquad  \|f\|_{C^\alpha(D)} = [f]_{C^\alpha(D)} + \|f\|_{L^\infty(D)}. \]

This is the H\"older norm in Theorem \ref{t:Calpha}, with respect to the kinetic distance $d_\ell$.

\section{Flattening the boundary}
\label{s:flattening}

In this section, following \cite{MR3592757} and \cite{guo2020}, we explain the change of variables to transform the equation \eqref{e:kFP} from an arbitrary domain to a flat boundary. We use it to prove Theorems \ref{t:Calpha} and \ref{t:Cinfty} only when $\Omega$ is not convex. We also use it for some technical lemmas about kinetic Sobolev spaces in Section \ref{s:sobolev}.

It is a local change of variables. Given a point $\xz \in \partial \Omega$, we build a transformation $\Phi : (B_r(\xz) \cap \Omega) \times \R^d \to \R^d_- \times \R^d$, and define $f(t,x,v) = \tilde f(t,\Phi(x,v))$, so that this new function $\tilde f$ satisfies an equation of the form \eqref{e:kFP} in $[0,T] \times (B_\rho(0) \cap \{ x_1 < 0\}) \times \R^d$. The change of variables $(y,w) = \Phi(x,v)$ involves both variables together. This is important to keep the transport part of the equation essentially unchanged.

We assume that the domain $\Omega$ has a $C^{1,1}$ boundary. Let $\phi: B_r(\xz) \to \R^d$ be a transformation that flattens the boundary (i.e. $\phi(\partial \Omega) \subset \{0\} \times \R^{d-1}$). Since $\partial \Omega$ is a Lipschitz boundary, we can make $D \phi$ and $D \phi^{-1}$ both bounded. Moreover, we assume that $\partial \Omega$ is $C^{1,1}$ so that $D^2 \phi$ is bounded as well.

Let us define $\Phi(x,v) = (\phi(x) , D\phi(x) v)$. Following the notation in \cite{guo2020}, we write $A = D\phi(x)$, $y = \phi(x)$, and $w = Av$. Note that the matrix $A=A(x)$ depends on $x$ (or $y$), but not on $v$ (or $w$, for $y$ fixed).

We define $\tilde f(t,y,w) = f(t,\Phi^{-1}(y,w))$. Equivalently, $f(t,x,v) = \tilde f(t,\Phi(x,v))$. The domain of the function $\tilde f$ is the image of $\Phi$, which contains a neighborhood of $\bar y = \phi(\xz)$ and all values of $w \in \R^d$.

The function $f$ satisfies the equation \eqref{e:kFP}. By a direct computation, we verify that the function $\tilde f$ satisfies the equation
\begin{eqnarray*}
 &&(\partial_t + w \cdot \nabla_y) \tilde f - \partial_{w_i} (\tilde a_{ij}(t,y,w) \partial_{w_j} \tilde f) = - \tilde b_i \partial_{w_i} \tilde f + \tilde G ,  
 \\
 &&\hspace{5cm}\text{ for } (t,y,w) \in [0,T] \times (B_\rho(\bar y) \cap \{y_1 < 0\}) \times \R^d,
\end{eqnarray*}
\begin{eqnarray*}
\mbox{where}&&\tilde G(t,y,w) = G(t,x,v), 
\\
&&\tilde b_i(t,y,w) = A_{ij} b_j(t,x,v) + v_j v_k \frac{\partial^2 \phi_i}{\partial x_j \partial x_k}, 
\\
&&\tilde a_{ij}(t,y,w) = A_{ir} A_{js} a_{rs}(t,x,v).
\end{eqnarray*}

The matrices $A$ and $A^{-1}$ are bounded because $\partial \Omega$ is a Lipschitz boundary. We have $\|\tilde a_{ij}\| \leq \|A\|^2 \|a_{ij}\|$. Moreover, the smallest eigenvalue of $\tilde a_{ij}$ is larger than or equal to the smallest eigenvalue of $a_{ij}$ times $\|A^{-1}\|^{-2}$. Thus, the uniform ellipticity parameters of $\tilde a_{ij}$ depend on the uniform ellipticity parameters of $a_{ij}$ and the upper bounds for $\|A\|$ and $\|A^{-1}\|$. We conclude that the coefficients $\tilde a_{ij}$ are uniformly elliptic provided that the original coefficients $a_{ij}$ are, and $\partial \Omega$ is a Lipschitz boundary.

The $L^\infty$ bound for $\tilde G$ is the same as the $L^\infty$ norm of $G$. The drift $\tilde b$ has an extra term that involves second derivatives of $\phi$. We assume that $\partial \Omega$ is a $C^{1,1}$ boundary only to have a bound on this term.

We observe that this new function $\tilde f$ satisfies an equation that retains the same assumptions as the original one for $f$. However, the spatial domain now has a flat boundary. With this transformation, we reduce the study of local regularity estimates for the equation \eqref{e:kFP}, to the case of flat boundaries. In particular, proving any result like Theorems \ref{t:Linfty}, \ref{t:Calpha} and \ref{t:Cinfty} in the case of the flat boundary, would immediately imply the same result for $C^{1,1}$ boundaries. The only problem is that the drift term $\tilde b$ depends on $v$ and the curvatures of the boundary. Thus, we would obtain local estimates with constants depending on the domain $\Omega$ and the velocity nearby. For domains $\Omega$ that are convex, we do not use the change of variables and we get estimates that do not depend on the domain $\Omega$ or the velocity $\vz$ in Theorems \ref{t:Calpha} and \ref{t:Cinfty}. We use this change of variables to reduce the general case of both theorems to the convex case.

\subsection{Specular reflection boundary condition}

In \cite{guo2020}, the authors explain how to use the change of variables that flattens the boundary together with a mirror extension to study kinetic equations with specular reflection boundary condition. A similar procedure is carried out in \cite{zhu2022regularity}. We explain it briefly in this section. Let us recall the specular reflection boundary condition. It consists in postulating that the function $f$ satisfies the following identity for all $(t,x,v) \in \gamma$,
\begin{equation} \label{e:specular-reflection}
f(t,x,v) = f(t,x,Rv), \text{ where } Rv = v - 2(v \cdot n) n.
\end{equation}
Let us define our notion of weak solution.

\begin{definition} \label{d:weak-sol-specular}
Let $D$ be a subset of $\R \times \Omega \times \R^d$. Assume that whenever $(t,x,v) \in \gamma \cap \partial D$, then also $(t,x,Rv) \in \gamma \cap \partial D$. We say that a function $f \in L^2(D)$ is a weak solution of \eqref{e:kFP} in $D$ with the specular reflection boundary condition \eqref{e:specular-reflection} if 
\begin{itemize}
\item $\nabla_v f \in L^2(D)$ 
\item For any test function $\varphi \in C^1_c(\R^{1+2d})$ so that $\supp \varphi \cap \partial D \subset \gamma$, and $\varphi(t,x,v) = \varphi(t,x,Rv)$ for any $(t,x,v) \in \gamma$, we have
\[ \begin{aligned}
\int_D - f (\partial_t + v \cdot \nabla_x) \varphi + a_{ij} \partial_{v_j} f \partial_{v_i} \varphi + b_i \partial_{v_i} f \varphi - G \varphi \dvxt = 0.
\end{aligned}
\]
\end{itemize}
\end{definition}

The mirror extension consists of the following technique. Assume that $\Omega = \{x_1 < 0\}$. We can always perform a change of variables as in Section \ref{s:flattening} to reduce to this case. We have $\gamma = \{x_1=0\}$. We extend the function $f$, and the equation, to $\{x_1>0\}$ in the following way. We write $R(v_1,v') = (-v_1,v')$ for $v' \in \R^{d-1}$. We also apply the reflection operator to the space variable $R(x_1,x') = (-x_1,x')$. We write $\tilde D$ to be the domain $D$ extended to the other side of $\gamma$ by mirror reflection
\[ \tilde D = D \cup (\partial D \cap \gamma) \cup \{(t,x,v): (t,Rx,Rv) \in D\}.\]
We extend the function $f$ to the whole domain $\tilde D$:
\[ \tilde f(z) = \begin{cases}
f(t,x,v) & \text{ if } (t,x,v) \in D, \\
f(t,Rx,Rv) & \text{ if } (t,x,v) \in \{(t,x,v): (t,Rx,Rv) \in D\}
\end{cases}
\]
Note that $\gamma$ has measure zero, so it is ok not to specify the value of $\tilde f$ there. A posteriori, $\tilde f$ will be extended on $\partial D \cap \gamma$ by continuity. We also extend the coefficients $a_{ij}$, the source term $G$, and the drift $b$, to $\tilde D$:
\begin{align*}
\tilde a_{ij}(t,x,v) &= \begin{cases}
a_{ij}(t,x,v) & \text{ if } (t,x,v) \in D, \\
a_{ij}(t,Rx,Rv) & \text{ if } (t,x,v) \in \{(t,x,v): (t,Rx,Rv) \in D\}
\end{cases} \\
\tilde G(t,x,v) &= \begin{cases}
G(t,x,v) & \text{ if } (t,x,v) \in D, \\
G(t,Rx,Rv) & \text{ if } (t,x,v) \in \{(t,x,v): (t,Rx,Rv) \in D\}
\end{cases} \\
\tilde b(t,x,v) &= \begin{cases}
b(t,x,v) & \text{ if } (t,x,v) \in D, \\
Rb(t,Rx,Rv) & \text{ if } (t,x,v) \in \{(t,x,v): (t,Rx,Rv) \in D\}
\end{cases} \\
\end{align*}

The following proposition is proved in \cite{guo2020}.
\begin{proposition} \label{p:mirror-extension}
If $f$ solves \eqref{e:kFP} with the specular reflection boundary condition \eqref{e:specular-reflection} in the domain $D$, then $\tilde f$ solves the same equation, with the coefficients $\tilde a_{ij}$, source term $\tilde G$, and the drift $\tilde b$, in the domain $\tilde D$ (across $\gamma$).
\end{proposition}

Thanks to Proposition \ref{p:mirror-extension}, the analysis of local regularity estimates for the equation \eqref{e:kFP} with the specular reflection boundary condition \ref{e:specular-reflection} is reduced to local interior estimates for the equation \eqref{e:kFP}. In particular, the following result holds.

\begin{theorem} \label{t:specular}
Let $\z = (\tz,\xz,\vz) \in \R^{1+2d}$ with $x_0 \in \Omega$. Assume that $f$ is a solution of \eqref{e:kFP} with the specular reflection boundary condition \eqref{e:specular-reflection}. The following $C^\alpha$ estimate holds.
\[ \|f\|_{C^\alpha(\HH_{1/2}(z^0))} \leq C \left( \|\tilde f\|_{L^2(Q_1(z^0))} + \|\tilde G\|_{L^\infty(Q_1(\z))} \right). \]
The constants $\alpha>0$ and $C$ depend on dimension, the ellipticity parameters, $\|\tilde b\|_{L^\infty}$, $\Omega$ and $z^0$.
\end{theorem}

Note that the right-hand side of the inequality involves the norms of the extended functions $\tilde f$ and $\tilde G$ in $Q_1(\z)$. The values of these functions depend on the values of $f$ and $G$ at points $(t,x,v)$ so that $(t,Rx,Rv) \in Q_1(\z)$. Depending on the value of $\z$, these points may be outside of $\HH_1(\z)$, and potentially quite far away from $\z$ with respect to the kinetic distance. Also, since the mirror extension relies on flattening the boundary first, the constant $C$ may depend on the velocity $\vz$. As we explained in Section \ref{s:flattening}, the change of variables that flattens the boundary introduces an extra drift term that becomes large for large values of $|v|$.

\section{Kinetic Sobolev spaces}
\label{s:sobolev}

Following \cite{albritton2019variational}, we define the kinetic Sobolev space as follows.

\begin{definition}
Given an open set $D \subset \R^{1+2d}$, we say $f \in \Hk(D)$ if $f \in L^2(D)$, $\nabla_v f \in L^2(D)$, and $(\partial_t + v \cdot \nabla_x) f \in L^2_{t,x} H^{-1}_v$ in the sense that
\[ \int f(t,x,v) (\partial_t + v \cdot \nabla_x) \varphi \dvxt \leq C \| \nabla_v \varphi \|_{L^2(D)}, \]
for any function $\varphi \in C^1_c(D)$. We further write
\[ \|f\|_{\Hk(D)}^2 = \|f\|_{L_2(D)}^2 + \|\nabla_v f\|_{L^2(D)}^2 + \|\trp f\|_{L^2_{t,x} H^{-1}_v(D)}^2 .\]
\end{definition}

The kinetic Sobolev spaces $\Hk$ are introduced in \cite{albritton2019variational} with a Gaussian weight with respect to the velocity variable. In the context of this paper, it is not convenient to consider any weight. Many of the basic properties of the kinetic space proved in \cite{albritton2019variational} work without a weight, with even a slightly cleaner proof.

Using convolutions with respect to the Galilean group is a convenient way to approximate functions in $\Hk$ with smooth ones. Let us consider a compactly-supported smooth function $\eta: \R^{1+2d} \to \R$ with integral one. Let us use kinetic scaling to produce a family of mollifiers:
\[ \eta_\eps(t,x,v) = \eps^{-2-4d} \eta(\eps^{-2} t, \eps^{-3}x, \eps^{-1} v). \]
If $\nabla_v f \in L^2(\R^{1+2d})$, we observe that $\nabla_v (\eta_\eps \ast f) = \eta_\eps \ast \nabla_v f$ converges to $\nabla_v f$ in $L^2$ and almost everywhere. Likewise, if $(\partial_t + v \cdot \nabla_x) f \in L^2_{t,x}(\R^{1+d},H_v^{-1}(\R^d))$, then $(\partial_t + v \cdot \nabla_x) (\eta_\eps \ast f) = \eta_\eps \ast (\partial_t + v \cdot \nabla_x) f$ converges to $(\partial_t + v \cdot \nabla_x) f$ in that same space. This last statement would not be true if we were using the convolution with respect to the usual Euclidean structure of $\R^{1+2d}$.

The next lemma is less general than a similar result in \cite{albritton2019variational}. Here, we approximate an arbitrary function $f \in \Hk(\HH_1(\z))$ with smooth functions. Interestingly, we can construct the smooth approximations $f_\eps \to f$ up to the boundary with a simple mollification.
\begin{lemma} \label{l:smooth-approx}
Let $f \in \Hk(\HH_1(\z))$. For $\eps >0$, there exists a family of smooth functions $f_\eps : \overline{\HH_{1-\eps}(\z)} \to \R$, so that $f_\eps \to f$ in $\Hk(\HH_{1-\eps_0})$ as $\eps \to 0$ (for any fixed $\eps_0>0$).
\end{lemma}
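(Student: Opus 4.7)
The plan is to follow the classical translate-then-mollify pattern, with Euclidean translation replaced by a Galilean left translation and Euclidean convolution replaced by the Galilean convolution of Section 2. For each $\eps>0$ I would first shift $f$ inward by a small Galilean element $(0,-\delta e,0)$, setting $\tilde f(z) := f\bigl((0,-\delta e,0)\circ z\bigr)$, where $e \in \R^d$ is a direction pointing into $\Omega$ near $\xz$. When $\Omega$ is not convex, I would first apply the flattening change of variables of Section \ref{s:flattening} so that $\Omega$ is locally the half-space $\{x_1<0\}$ and $e=-e_1$ works uniformly; in the convex case, such a direction exists directly. The translated function $\tilde f$ is then defined on an open kinetic neighborhood of $\HH_{1-\eps}(\z)$, provided $\delta$ dominates $\eps^3$. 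Because $(0,-\delta e,0)$ is a Galilean element, left-invariance gives $\nabla_v\tilde f$, $\nabla_x\tilde f$ and $\trp\tilde f$ as the corresponding translates of the originals, lying in the same $L^2$ or $L^2_{t,x}H^{-1}_v$ spaces.

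The smoothed approximation is then $f_\eps := \eta_\eps \ast \tilde f$, where $\eta_\eps$ is the scaled mollifier supported in $Q_{c\eps}$. Using the explicit formula $\omega^{-1}\circ z = (t-s,\, x-y-(t-s)w,\, v-w)$, for $z\in\HH_{1-\eps}(\z)$ and $\omega\in\supp\eta_\eps$ the spatial coordinate of $\omega^{-1}\circ z$ differs from $x$ by $O(\eps^3)$, so choosing $\delta$ of order $\eps$ (much larger than $\eps^3$) places $\omega^{-1}\circ z$ inside the domain of $\tilde f$; hence $f_\eps$ is smooth on $\overline{\HH_{1-\eps}(\z)}$.

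For the convergence in $\Hk(\HH_{1-\eps_0}(\z))$, I would split
\[ f_\eps - f = \bigl(\eta_\eps \ast \tilde f - \tilde f\bigr) + \bigl(\tilde f - f\bigr). \]
The second piece tends to $0$ in $\Hk$ as $\delta \to 0$ by continuity of translations: in $L^2$ this is classical, and for the $L^2_{t,x}H^{-1}_v$ component it follows by writing $\trp f$ in its dual form and approximating test functions by smooth compactly supported ones in $L^2_{t,x}H^1_v$. The first piece tends to $0$ in $\Hk$ as $\eps\to 0$ by the standard mollifier argument combined with the left-invariance identities $\nabla_v(\eta_\eps\ast\tilde f)=\eta_\eps\ast\nabla_v\tilde f$ and $\trp(\eta_\eps\ast\tilde f)=\eta_\eps\ast\trp\tilde f$ recalled in Section 2 (this is precisely the point where Galilean—rather than Euclidean—convolution is essential). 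Letting $\delta = \delta(\eps) \to 0$ with $\delta \gg \eps^3$ makes both terms vanish simultaneously.

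The main technical obstacle is the geometric verification in the middle step: because Galilean multiplication couples $t$, $x$, and $v$, a displacement of kinetic size $\eps$ can shift the spatial coordinate by $O(\eps^3)$, and the inward translation buffer $\delta$ must dominate this coupled contribution while still tending to $0$. Everything else is a routine adaptation of the Euclidean translate-and-mollify scheme, and no special behaviour at the various parts of $\partial \HH_{1-\eps}(\z)$ coming from $\partial Q_{1-\eps}(\z)$ needs to be tracked separately since we are free to take $\eps<\eps_0$.
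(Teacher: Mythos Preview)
Your translate-then-mollify scheme has a genuine gap at the final-time boundary $\{t=\tz\}$. The kinetic cylinders $Q_r(\z)$ all share the same top face $\{t=\tz\}$ regardless of $r$, so the inclusion $\HH_{1-\eps_0}(\z)\subset\HH_1(\z)$ gives you no buffer there, and a purely spatial Galilean translation by $(0,-\delta e,0)$ does not move that face at all. Since your mollifier is supported in $Q_{c\eps}$, i.e.\ with $s\in(-(c\eps)^2,0]$, the point $\omega^{-1}\circ z$ has time coordinate $t-s\ge t$; for $z$ with $t=\tz$ you are then asking for values of $\tilde f$ at times strictly greater than $\tz$, where neither $f$ nor $\tilde f$ is defined. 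Your claim that ``$\tilde f$ is then defined on an open kinetic neighborhood of $\HH_{1-\eps}(\z)$'' is therefore false as stated. The fix is easy---also translate backward in time, or take the mollifier supported in $\{s>0\}$---but it is not in your proposal.

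There is also a miscalculation in the spatial step: from your own formula $\omega^{-1}\circ z=(t-s,\,x-y-(t-s)w,\,v-w)$, the spatial displacement is $-y-(t-s)w$, and since $|t-s|$ is of order $1$ while $|w|\lesssim\eps$, this is $O(\eps)$, not $O(\eps^3)$. Your choice $\delta\sim\eps$ happens to be the right order, but the condition ``$\delta\gg\eps^3$'' that you record at the end would not suffice.

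The paper handles all the boundary faces with a single device and no preliminary translation: after flattening to $\Omega=\{x_1<0\}$, it extends $f$ (and the vector field $F$ with $\trp f=\partial_{v_i}F_i$) by zero across $\{x_1=0\}$ and mollifies with $\eta_1$ supported in $\{t>0,\ x_1>0,\ v_1<0\}$. Each sign is chosen so that for $z\in\HH_{1-\eps_0}$ the point $\omega^{-1}\circ z$ lands back in $\HH_1$ (in particular $t-s<t\le\tz$ and the first spatial coordinate stays negative), so the zero extension is never actually touched. The convergence of $\trp f_\eps$ then follows in one line from $\trp f_\eps=\partial_{v_i}(\eta_\eps\ast F_i)$ and the $L^2$ convergence of $\eta_\eps\ast F_i$.
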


\begin{proof}
Using the change of variables described in Section \ref{s:flattening}, we assume without loss of generality that the boundary $\partial \Omega$ is flat: $\Omega = \{ x_1 < 0\}$. Moreover, by a simple translation (using the Galilean group structure), we also assume that $\z = (0,0,(v_1^0,0,\dots,0))$. We cannot assume that $v_1^0 = 0$ because a Galilean change of variables that modifies the velocity component that is normal to the boundary $\partial \{x_1 > 0\}$ would also modify the boundary of the equation. It is straight forward to verify that local norms of $f$ in $\Hk$ and $\tilde f$ in $\Hk$ are comparable.

Since $f \in \Hk(\HH_1)$, in particular $(\partial_t + v \cdot \nabla_x) f \in L^2_{t,x} H^{-1}_v$. Thus, there exists some vector field $F: \HH_1 \to \R^d$ such that $\|F\|_{L^2(\HH_1)} \lesssim \|f\|_{\Hk}$ and 
\[ (\partial_t + v \cdot \nabla_x) f = \partial_{v_i} F_i.\]

Let us define $\bar f$ and $\bar F$ by extending $f$ and $F$ to all of $Q_1(z^0)$ making them equal to zero when $x_1 > 0$.

Consider a smooth function $\eta_1$, compactly supported and with unit integral. Let us take a function $\eta_1$ that is supported in the set $\{x_1>0\} \cap \{v_1<0\} \cap \{t > 0\}$. We scale it to turn it into an approximation of the identity.
\[ \eta_r(t,x,v) = r^{-2-4d} \eta_1(r^{-2}t, r^{-3}x, r^{-1} v).\]
We define 
\[ f_\eps := \eta_\eps \ast \bar f.\]
Clearly, we have $f_\eps \in C^\infty$ and $\nabla_v f_\eps \to \nabla_v f$ in $L^2(\HH_{1-\eps_0})$, for any $\eps_0 > 0$. The convergence of $(\partial_t + v \cdot \nabla_x) f_\eps$ in $L^2_{t,x} H^{-1}_v$ results from the choice of the support of $\eta_\eps$. Typically, when we extend a function $f$ as zero, we may be creating a singular part for its derivatives across the boundary. The choice of the support of $\eta_1$ is so that $f_\eps(z)$ depends only on the values of $f$ in $\HH_1$, for any $z \in \HH_{1-\eps_0}$. Let $\varphi$ be a $C^1$ function whose support is inside $\HH_{1-\eps_0}$. We compute
\begin{align*}
&\int \trp f_\eps \varphi \dvxt = - \int f_\eps \trp \varphi \dvxt, \\
&= -\int (\eta_\eps \ast \bar f) \trp \varphi \dvxt= -\int -\bar f \trp (\hat \eta_\eps \ast \varphi) \dvxt.
\end{align*}
Noticing that $\hat \eta_\eps \ast \varphi$ is supported inside $\HH_1$ for $\eps$ small enough, we find 
\begin{align*}
\int \trp f_\eps \varphi \dvxt &= \int_{\HH_1} \trp f (\hat \eta_\eps \ast \varphi) \dvxt, \\
&= \int_{\HH_1} \partial_{v_i} F_i (\hat \eta_\eps \ast \varphi) \dvxt, \\
&= \int_{\HH_1} \partial_{v_i} (\eta_\eps \ast F_i) \varphi \dvxt.
\end{align*}
Thus, $\trp f_\eps = \partial_{v_i} (\eta_\eps \ast F_i)$ in $\HH_{1-\eps_0}$. But clearly $\eta_\eps \ast F_i \to F_i$ in $L^2$, from which we deduce that $\trp f_\eps \to \trp f$ in $L^2_{t,x} H^{-1}_v$.
\end{proof}

The following proposition allows us to define some form of boundary values for a function $f$ in $\Hk$. It is related to but not as strong as the conjectured Question 1.8 in \cite{albritton2019variational}.

\begin{proposition} \label{p:trace}
The restriction operator $f \mapsto f_{|\gamma}$ is well defined from $\Hk(\HH_1(\z))$ to $L^2_{loc}(\gamma,\omega)$, for the weight $\omega = \min(|v \cdot n| , (v \cdot n)^2)$. More precisely, for any $\tilde \gamma$ that is compactly contained in $\gamma \cap \HH_1(\z)$, there is a constant $C$ (independent of $\z$) so that for all $f \in \Hk(\HH_1(\z))$, then
\[ \int_{\tilde \gamma} f^2 \omega \dd \gamma \leq C \|f\|^2_{\Hk(\HH_1(\z))}.\]

Moreover, if $f_j \to f$ strongly in $L^2_{t,x} H^1_v$ and $\trp f_j \to \trp f$ weakly in $L^2_{t,x} H^{-1}_v$, then $f_j \to f$ strongly in $L^2_{loc}(\tilde \gamma,\omega)$.
\end{proposition}

\begin{proof}
We prove the inequality for $f$ smooth. From Lemma \ref{l:smooth-approx}, every $f \in \Hk(\HH_1(\z))$ can be approximated with smooth ones. By density, this defines the trace operator in the space $\Hk$.

Let $\varphi_+$ and $\varphi_-$ be nonnegative functions defined initially on $\gamma$ as
\begin{align}
\varphi_+(t,x,v) &= \min((v \cdot n)_+,1), \\
\varphi_-(t,x,v) &= \min((v \cdot n)_-,1).
\end{align}

They are Lipschitz functions provided that the domain $\Omega$ has a $C^{1,1}$ boundary. Our choice of the functions $\varphi_\pm$ is so that $(\varphi_+ - \varphi_-) \cdot (n \cdot v)$ is nonnegative on $\gamma$ and $\approx \omega$.

Let us extend $\varphi_\pm$ to all of $\HH_1(\z)$ as Lipschitz functions. We want to estimate the integral of $f \varphi_\pm (v\cdot n)$ on $\gamma$. 

We start with $\gamma_+$. Let $\eta$ be a smooth function that equals one on $\tilde \gamma$ and equals zero on $\partial Q_1$. We integrate $\trp(\varphi_+ \eta f^2)$ in $\HH_1(\z)$ to obtain
\begin{align*}
\int_\gamma \varphi_+ \eta f^2 (n \cdot v) \dd \gamma & =
\int_{\HH_1(\z)} \trp(\eta \varphi_+ f^2) \dvxt \\
&= \int_{\HH_1(\z)} \trp(\eta \varphi_+) f^2 + 2 \eta \varphi_+ f \trp f  \dvxt \\
&\leq \|\trp(\eta \varphi_+)\|_{L^\infty} \|f\|_{L^2}^2 + \|2 \eta \varphi_+ f\|_{L^2_{t,x} H^1_v} \|\trp f\|_{L^2_{t,x} H^{-1}_v} \\
&\leq C \|f\|_{\Hk(\HH_1(\z_0))}^2.
\end{align*}

Because of our choices of $\varphi_+$ and $\eta$, we have that $\varphi_+ \eta (n \cdot v) \geq 0$ and it equals $\omega$ on $\gamma_+ \cap \tilde \gamma$. The computation above gives us our desired bound on the outgoing part of the boundary. Similarly, we integrate $-\trp(\varphi_- \eta f^2)$ to obtain a bound on $\gamma_-$. 

Observing that $\trp f$ is multiplied with $2 \eta \varphi_+ f$ in the integral above, we deduce the last assertion about the strong convergence of the trace on $L^2(\gamma_0,\omega)$, by a standard weak-strong pairing argument.
\end{proof}

\begin{remark}
In \cite[Question 1.8]{albritton2019variational}, the authors conjecture a stronger trace inequality than the one provided by Proposition \ref{p:trace}. Essentially, they claim that the inequality may still hold with the simpler weight $\omega = |v \cdot n|$. We have not been able to either prove or disprove their conjecture. Consequently, we do not know if the weight $\omega$ in Proposition \ref{p:trace} is sharp. 
\end{remark}


%
%
%

One possible way to make sense of the inflow boundary condition $f = g$ on $\gamma_-$ would be the following. Assume that $\trp f \in L^2_{t,x} H^{-1}_v$. Then, for all $\varphi \in C^1_c(Q_1(\z))$ so that $\varphi = 0$ on $\gamma_+$ we have
\begin{equation} \label{e:weak-boundary}
\int (\partial_t + v \cdot \nabla_x) f \, \varphi + f \, (\partial_t + v \cdot \nabla_x) \varphi \dvxt= \int_{\gamma_-} g \varphi (v \cdot n) \dd \gamma.
\end{equation}

\begin{lemma} \label{l:weak-trace-zero}
Assume $f \in \Hk(\HH_1(\z))$ and \eqref{e:weak-boundary} holds for any $\varphi \in C^1_c(Q_1(\z))$ so that $\varphi = 0$ on $\gamma_+$. Let $f_{|\gamma}$ be the function in $L^2(\gamma,\omega)$ described in Proposition \ref{p:trace}. Then $f_{|\gamma} = g$ on $\gamma_-$.
\end{lemma}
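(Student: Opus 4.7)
The plan is to reduce to the smooth case via Lemma~\ref{l:smooth-approx} and then perform a direct integration by parts, identifying the boundary term with the inflow data from \eqref{e:weak-boundary}. First I would fix a test function $\varphi \in C^1_c(Q_1(\z))$ vanishing on $\gamma_+$, choose $\eps_0>0$ so that $\supp \varphi \subset Q_{1-\eps_0}(\z)$, and take smooth approximations $f_\eps \to f$ in $\Hk(\HH_{1-\eps_0}(\z))$. For each $f_\eps$, the divergence theorem applied to the field $(f_\eps \varphi, v f_\eps \varphi)$ in $\HH_1(\z)$ immediately gives
\[ \int_{\HH_1(\z)} \trp f_\eps \, \varphi + f_\eps \trp \varphi \dvxt = \int_\gamma f_\eps \varphi \,(v \cdot n) \dd \gamma, \]
with no other boundary terms since $\varphi$ vanishes near the remainder of $\partial \HH_1(\z)$.

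Next I would pass to the limit $\eps \to 0$. The second term on the left converges by the strong $L^2$ convergence of $f_\eps$; the first converges by pairing $\trp f_\eps \to \trp f$ in $L^2_{t,x} H^{-1}_v$ against $\varphi \in L^2_{t,x} H^1_v$. For the boundary term, Proposition~\ref{p:trace} provides strong convergence $f_\eps \to f$ in $L^2(\tilde\gamma,\omega)$ on any $\tilde\gamma$ compactly contained in $\gamma \cap Q_1(\z)$, which is enough because on $\supp \varphi$ the quantity $|v\cdot n|$ is bounded so $\varphi(v\cdot n)/\sqrt\omega \in L^\infty$. Comparing the limiting identity with \eqref{e:weak-boundary} yields
\[ \int_\gamma f \varphi (v \cdot n) \dd \gamma = \int_{\gamma_-} g \varphi (v \cdot n) \dd \gamma. \]
Because $\varphi = 0$ on $\gamma_+$ and $v \cdot n = 0$ on $\gamma_0$, the left-hand side collapses to the integral over $\gamma_-$, so $\int_{\gamma_-} (f-g)\varphi(v\cdot n) \dd \gamma = 0$. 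Density of such $\varphi$'s in $C_c(\gamma_- \cap Q_1(\z))$ is immediate: any smooth compactly supported function on the relatively open set $\gamma_- \cap Q_1(\z)$ extends inside $Q_1(\z)$ to a $\varphi$ that vanishes near $\gamma_+$. Since $v \cdot n < 0$ on $\gamma_-$, this forces $f_{|\gamma} = g$ a.e. on $\gamma_-$.

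The main technical obstacle is matching the weights in the boundary integrals when taking the limit. The trace from Proposition~\ref{p:trace} lives in $L^2(\gamma,\omega)$ with the light weight $\omega = \min(|v\cdot n|, (v\cdot n)^2)$, while the boundary pairing carries the heavier factor $(v \cdot n)$. Since $(v\cdot n)/\sqrt\omega \sim \sqrt{|v\cdot n|}$ at infinity, the convergence of $\int_\gamma f_\eps \varphi (v\cdot n) \dd\gamma$ genuinely requires the compact support of $\varphi$ to cap $|v\cdot n|$; likewise, the density argument in the last step gives a.e. equality only on the set where $|v\cdot n|>0$, which is exactly $\gamma_-$ modulo the measure-zero set $\gamma_0$. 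Everything else is a routine application of the approximation and trace statements already established.
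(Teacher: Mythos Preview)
Your proof is correct and follows essentially the same approach as the paper: both establish the integration-by-parts identity for smooth functions, observe that each term is continuous on $\Hk$ (in particular, that $\varphi(v\cdot n)$ can be paired against the $L^2(\gamma,\omega)$ trace because $|\varphi(v\cdot n)| \lesssim \omega$ when $\varphi$ vanishes on $\gamma_+$), and then extend by density using Lemma~\ref{l:smooth-approx} and Proposition~\ref{p:trace}. The paper phrases this as a one-line continuity-plus-density argument, whereas you write out the approximating sequence explicitly and perform the final density step on $\gamma_-$ separately, but there is no real difference in substance.
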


\begin{proof}
When $f$ is a smooth function, the formula \eqref{e:weak-boundary} is a standard integration by parts with $g = f_{|\gamma_-}$.

Since $\varphi=0$ on $\gamma_+$, then $|\varphi (v \cdot n)| \leq \min(|v\cdot n|, |v\cdot n|^2)$. Therefore, the boundary integral
\[ g \mapsto \int_{\gamma_-} g \varphi (v \cdot n) \dd \gamma, \]
is a bounded linear operator on the space $L^2(\bar \gamma_-,\omega)$, where $\bar \gamma_- = \gamma_- \cap \supp \varphi$.

Because of Proposition \ref{p:trace}, we get that 
\[ f \mapsto \int_{\bar \gamma_-} f_{|\gamma_-} \varphi (v \cdot n) \dd \gamma, \]
is a continuous linear functional on $\Hk(\HH_1(\z))$. We deduce that every term in \eqref{e:weak-boundary} is continuous on $\Hk(\HH_1(\z))$ as a function of $f$. Since the identity holds when $f$ is smooth with $g = f_{|\gamma_-}$, it must hold for every $f \in \Hk(\HH_1(\z))$ with $g = f_{|\gamma_-}$ by density.
\end{proof}

At some point in this article, we will want to test a function $f$ against a test function $\varphi \in C^1_c(\z)$ that does not vanish on $\gamma_+$. Assuming that both functions are nonnegative, we have an inequality resulting from the missing boundary term on $\gamma_-$. The following lemma applies in particular when $f \in \Hk(\HH_1(\z))$, but also in more general situations.

\begin{lemma} \label{l:boundary-loss}
Let $f \in L^2(\HH_1(\z))$ be a nonnegative function so that
\[ \trp f = -\mu + \zeta,\]
where $\mu$ is a signed measure with finite total variation in $\HH_1(\z)$, and $\zeta \in L^2_{t,x} H^{-1}_v$. Assume that for any nonnegative test function $\varphi \in C^1_c(Q_1(\z))$, so that $\varphi = 0$ on $\gamma_+$, we have
\[ \int_{\HH_1(\z)} \trp f \, \varphi + f \, \trp \varphi \dvxt \geq 0. \]

If $\gamma\in C^1_c(Q_1(\z))$ is any nonnegative test function (that may not vanish on $\gamma_+$), then we also have the same inequality.
\end{lemma}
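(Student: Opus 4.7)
The plan is to apply the hypothesis to $\varphi\eta_\eps$ for a suitable smooth cutoff $\eta_\eps$ depending only on $x$ and vanishing on $\partial\Omega$, then pass to the limit $\eps\to 0$. The left hand side of the resulting inequality converges to the desired quantity, and the residual boundary term, which involves $\trp\eta_\eps$ concentrated near $\partial\Omega$, must be shown to have nonnegative $\liminf$; this last step is the main technical point and requires a second application of the hypothesis.

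After locally flattening the boundary via the change of variables of Section \ref{s:flattening} (so $\Omega = \{x_1<0\}$), I would take a smooth nondecreasing $\rho_\eps \in C^\infty(\R)$ with $\rho_\eps(0)=1$ and $\rho_\eps(s)=0$ for $s \leq -\eps$, and set $\eta_\eps(x) := 1 - \rho_\eps(x_1)$. Then $\varphi\eta_\eps \in C_c^1(Q_1(\z))$ is nonnegative and vanishes on all of $\gamma \supset \gamma_+$, so the hypothesis yields
\[
\int \trp f \, \varphi\eta_\eps + f \eta_\eps \trp\varphi \dvxt \geq \int f \varphi v_1 \rho_\eps'(x_1) \dvxt
\]
after using $\trp(\varphi\eta_\eps) = \eta_\eps\trp\varphi - \varphi v_1 \rho_\eps'(x_1)$. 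Since $\eta_\eps$ is independent of $v$, the identity $\partial_{v_i}(\varphi\eta_\eps) = \eta_\eps\partial_{v_i}\varphi$ lets dominated convergence pass through the measure pairing, the $L^2_{t,x}H^{-1}_v$/$L^2_{t,x}H^1_v$ pairing with $\zeta$, and the $L^2$ pairing with $f\trp\varphi$, producing $\int \trp f \, \varphi + f \trp\varphi \dvxt$ in the limit.

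It remains to show $\liminf_{\eps\to0} \int f\varphi v_1 \rho_\eps'(x_1)\dvxt \geq 0$. Splitting $v_1 = (v_1)_+ - (v_1)_-$, the $(v_1)_+$ piece has a nonnegative integrand. For the $(v_1)_-$ piece, I would invoke the hypothesis once more with the test function $\varphi G_k(v_1) \rho_\eps(x_1)$, where $G_k$ is a $C^1$ nonnegative smoothing of $(v_1)_-$ satisfying $G_k \equiv 0$ on $\{v_1 \geq 0\}$, $G_k(v_1) = |v_1|$ for $v_1 \leq -1/k$, and $|G_k'| \leq 1$. Since $\trp G_k = 0$ and $v_1 G_k \leq 0$ on its support, this yields
\[
\int f\varphi \, |v_1| \, G_k \rho_\eps'(x_1)\dvxt \leq \int \trp f \cdot \varphi G_k \rho_\eps + f G_k \rho_\eps \trp\varphi \dvxt,
\]
and the right hand side tends to zero as $\eps\to 0$ for each fixed $k$, because $\rho_\eps\to 0$ a.e.\ and $\|\rho_\eps\|_{L^2(\R)} = O(\sqrt\eps)$ allow dominated convergence with a bound uniform in $k$. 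Combining this with the pointwise inequality $(v_1)_- \leq k\, G_k(v_1)|v_1| + k^{-1}\one_{\{|v_1|\leq 1/k\}}$ and a Cauchy--Schwarz estimate on the thin transition region $\{x_1\in(-\eps,0),\ v_1\in(-1/k,0)\}$, with $k = k(\eps)\to\infty$ chosen to balance the two resulting error terms, gives $\int f\varphi(v_1)_-\rho_\eps'\dvxt \to 0$.

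The main obstacle is precisely this double-limit analysis of the $(v_1)_-$ contribution: no purely one-parameter cutoff produces a term with definite sign on the boundary while still converging to $1$ on the interior, so a secondary velocity cutoff $G_k$ must be introduced to isolate the $\gamma_-$ contribution (the part forced to vanish by the hypothesis) and balanced carefully against $\eps$. In the particular case $f \in \Hk(\HH_1(\z))$, the argument simplifies considerably: Lemma \ref{l:smooth-approx} furnishes smooth approximations $f_j \to f$ in $\Hk$, integration by parts for each $f_j$ yields $\int \trp f_j \varphi + f_j \trp\varphi\dvxt = \int_\gamma f_j\, \varphi (v\cdot n)\dd \gamma$, and passing $j\to\infty$ using the strong trace convergence from Proposition \ref{p:trace} reduces the claim to the observation that the hypothesis together with the pointwise sign $v\cdot n < 0$ on $\gamma_-$ forces $f_{|\gamma_-}=0$ in the weighted sense, after which only the manifestly nonnegative $\gamma_+$ contribution remains.
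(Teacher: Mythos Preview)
Your strategy differs substantially from the paper's, and the execution has a genuine gap.

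The paper cuts off with $\eta_\eps(z) = \tilde\eta_1(\dist(z,\gamma_+)/\eps)$, where the distance is the Euclidean distance in the full $(t,x,v)$ space to $\gamma_+$ only. The point of this choice is geometric: in the flattened picture $\Omega=\{x_1<0\}$, one has $\dist(z,\gamma_+)=|x_1|$ when $v_1>0$ and $\dist(z,\gamma_+)=\sqrt{x_1^2+v_1^2}$ when $v_1\le 0$. Hence $v\cdot\nabla_x\eta_\eps\le 0$ wherever $v_1\ge 0$, while the region where the sign is bad is contained in $\{|x_1|<\eps,\ -\eps<v_1<0\}$, an $\eps$-neighborhood of the \emph{codimension-two} set $\gamma_0$. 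A single Cauchy--Schwarz on this region (volume $\approx\eps^2$, integrand $\lesssim\eps^{-1}$) gives $\int f\varphi\trp\eta_\eps\le C\|f\|_{L^2(\{\dist(z,\gamma_0)<\eps\})}\to 0$, with no second parameter and no balancing. The $\zeta$ term is handled by observing that $\varphi\eta_\eps\to\varphi$ \emph{weakly} in $L^2_{t,x}H^1_v$, and the $\mu$ term by monotone/dominated convergence.

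Your cutoff depends only on $x$, so the bad-sign region is all of $\gamma_-$ (codimension one), which is why you need the secondary velocity cutoff $G_k$ and a joint limit. The gap is in the balancing step. With your $G_k$ (satisfying $|G_k'|\le 1$), the hypothesis indeed gives $\int f\varphi\,|v_1|G_k\,\rho_\eps'\le R(\eps)$ with $R(\eps)\to 0$ uniformly in $k$, but your pointwise inequality $(v_1)_-\le k\,|v_1|G_k + k^{-1}\one_{|v_1|\le 1/k}$ forces a factor of $k$ onto $R(\eps)$. The measure contribution $|\mu|(\{-\eps<x_1<0\})$ inside $R(\eps)$ carries no quantitative rate in $\eps$, so for a measure that charges thin strips slowly (say $|\mu|(\{-\eps<x_1<0\})\sim 1/\log(1/\eps)$) there is no choice of $k(\eps)\to\infty$ that makes both $kR(\eps)$ and your Cauchy--Schwarz term $Ck^{-3/2}\eps^{-1/2}$ tend to zero. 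The argument as written does not close.

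A fix within your framework is to replace $G_k$ by a smoothed indicator $H_k$ of $\{v_1<0\}$ with $|H_k'|\lesssim k$; then the pointwise comparison $(v_1)_-\le |v_1|H_k + Ck^{-1}\one_{|v_1|\le C/k}$ has no prefactor $k$, at the cost of an extra $\sqrt{k\eps}$ in the $\zeta$ part of $R_k(\eps)$, and the choice $k=\eps^{-1/2}$ makes everything $O(\eps^{1/4})+o(1)$. But the paper's single $(x,v)$-dependent cutoff avoids this entire double-limit machinery.
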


In Lemma \ref{l:boundary-loss}, when we write $\trp f$, we mean the directional derivative of $f$ in the sense of distributions in the open set $\HH_1(\z)$. Note that the main assumption of the lemma would mean that $f=0$ on $\gamma_-$ if $f$ was a smooth function.

\begin{proof}
Let $\tilde \eta_1:[0,\infty) \to \R$ be a smooth function equal to zero in a neighborhood of zero, and equal to one on $[1,\infty)$. Let $\eta_\eps(z) = \tilde\eta_1(\dist(z,\gamma_+)/\eps)$, where $\dist$ denotes the usual Euclidean distance in $\R^{1+2d}$.\footnote{It is usually a bad idea to use the usual Euclidean distance in the context of kinetic equations. For the purpose of building the test function in this proof, either choice of distance works fine. We thought that the Euclidean distance gives us an easier geometry to understand intuitively in this case.} Consider $\varphi_\eps = \eta_\eps \varphi$. Since $\varphi_\eps = 0$ on $\gamma_+$, we have
\begin{equation} \label{pp:b1}
\int (\partial_t + v \cdot \nabla_x) f \, \varphi_\eps + f \, (\partial_t + v \cdot \nabla_x) \varphi_\eps \dvxt \geq 0.
\end{equation}
Clearly, $\varphi_\eps \to \varphi$ in $L^2(Q_1)$. We claim that also $\varphi_\eps \to \varphi$ weakly in $L^2_{t,x} H^1_v$. Indeed, we have $\nabla_v \varphi_\eps = (\nabla_v \varphi)\eta_\eps + \varphi \nabla_v \eta_\eps$. The convergence of the first term in $L^2$ is trivial because $\nabla_v \varphi$ is a fixed bounded function and $\eta_\eps \to 1$ in $L^2$. The second term is slightly more delicate. The derivative $\nabla_v \eta_\eps$ is $\lesssim \eps^{-1}$. It is also supported in a neighborhood of diameter $\eps$ around $\gamma_0$. Thus
\[ \| \varphi \nabla_v \eta_\eps \|_{L^2}^2 \lesssim \eps^{-2} |\{ z : \dist(z,\gamma_0) < \eps \}| \approx 1.\]
Thus, $\varphi \nabla_v \eta_\eps$ is bounded in $L^2$. We cannot say it converges to zero in $L^2$, but since its support is contained in $\{ z : \dist(z,\gamma_0) < \eps \}$ that shrinks to measure zero, it converges to zero weakly in $L^2$.

Recall that $\trp f = \zeta - \mu$. Since $\varphi_\eps \to \varphi$ weakly in $L^2_{t,x} H^1_v$, then
\[ \int \zeta \, \varphi_\eps \dvxt \to \int \zeta \, \varphi  \dvxt . \]

Since $\varphi \geq 0$, we have that $\varphi_\eps$ converges monotonically to $\varphi$ in $\HH_1(\z)$. We assume that $\mu$ has finite total variation. By the dominated convergence theorem,
\[ \int \mu \, \varphi_\eps \dvxt \to \int \mu \, \varphi  \dvxt . \]
Combining the last two limits, we get
\[ \int \trp f \, \varphi_\eps \dvxt \to \int \trp f \, \varphi  \dvxt . \]

Let us move to the next term in \eqref{pp:b1}. We have $\trp \varphi_\eps = \eta_\eps \trp \varphi + \varphi \trp \eta_\eps$. The first term clearly converges in $L^2$. For the second term, we see that $\partial_t \eta_\eps = 0$. The key observation is that by construction $v \cdot \nabla_x \eta_\eps \leq 0$ except at most in a neighborhood of $\gamma_0$ of diameter $\eps$, where we have $v \cdot \nabla_x \eta_\eps \lesssim \eps^{-1}$. Therefore
\begin{align*}
\int f \, \varphi \, (\partial_t + v \cdot \nabla_x) \eta_\eps \dvxt &\lesssim \eps^{-1} \int_{Q_1 \cap \{\dist(z,\gamma_0) < \eps\}} f \dvxt \\
&\leq \eps^{-1} \left(\int_{Q_1 \cap \{\dist(z,\gamma_0) < \eps\}} f^2 \dvxt\right)^{1/2} |\{Q_1 \cap \{\dist(z,\gamma_0) < \eps\} \}|^{1/2} \\
&= c \left(\int_{Q_1 \cap \{\dist(z,\gamma_0) < \eps\}} f^2 \dvxt\right)^{1/2} \to 0 \text{ as } \eps \to 0.
\end{align*}

Using product rule in \eqref{pp:b1}, we have
\[ \int (\partial_t + v \cdot \nabla_x) f \, \varphi_\eps + f \, \eta_\eps \, (\partial_t + v \cdot \nabla_x) \varphi \dvxt \geq -\int f \, \varphi \, (\partial_t + v \cdot \nabla_x) \eta_\eps \dvxt.
\]
Taking lim-inf as $\eps \to 0$, we finish the proof.

\end{proof}

Here, we also recall some well known hypoelliptic estimates for the kinetic Sobolev spaces. A proof of the following two propositions can be found, for example, in \cite{albritton2019variational}.

\begin{proposition} \label{p:interior-hypoelliptic}
Let $D \subset \R^{1+2d}$ be an open set and $D_1$ be compactly contained in $D$. There exists an $s>0$, depending on dimension only, so that we have
\[ \|f\|_{W^{s,2}(D_1)} \leq C \|f\|_{\Hk(D)},\]
for some constant $C$ depending on the domains $D$ and $D_1$.
\end{proposition}

As a corollary, we state the compactness of the embedding $\Hk \hookrightarrow L^2_{loc}$.

\begin{proposition} \label{p:compact-embedding}
Let $D \subset \R^{1+2d}$ be an open set and $D_1$ be compactly contained in $D$. The embedding $\Hk(D) \hookrightarrow L^2(D_1)$ is compact.
\end{proposition}

\section{The notion of solution}

We start with the following rather weak notion of solution.

\begin{definition} \label{d:weak-sol}
Let $D$ be an open subset of $\R \times \Omega \times \R^d$. We say that a function $f \in L^2(D)$ is a weak solution of \eqref{e:kFP} in $D$ if $\nabla_v f \in L^2(D)$ and for any test function $\varphi \in C^1_c(\R^{1+2d})$ so that $\supp \varphi \cap \partial D \subset \gamma_- \cup \gamma_0$, we have
\[ \begin{aligned}
\int_D - f (\partial_t + v \cdot \nabla_x) \varphi + a_{ij} \partial_{v_j} f \partial_{v_i} \varphi + b_i \partial_{v_i} f \varphi - G \varphi \dvxt \\
= -\int_{\gamma_-} g \varphi (v \cdot n) \dd \gamma.
\end{aligned}
\]
\end{definition}

Definition \ref{d:weak-sol} applies in particular for test functions $\varphi$ that are compactly supported inside $D$. In that case, it literally says that \eqref{e:kFP} holds in the sense of distributions. Since $\nabla_v f \in L^2$, we observe that every term other than transport in \eqref{e:kFP} belongs to $L^2_{t,x} H^{-1}_v$. Thus, Definition \ref{d:weak-sol} immediately implies that $f \in \Hk$, even if it is not explicitly given as a requirement.

The fact that we allow the support of the test function $\varphi$ in Definition \ref{d:weak-sol} to intersect the physical boundary $\partial \Omega$ is used to encode the boundary condition. Indeed, if $f$ was a classical solution of \eqref{e:kFP}, we integrate by parts and obtain
\begin{align*}
\int - f (\partial_t + v \cdot \nabla_x) \varphi &+ a_{ij} \partial_{v_j} f \partial_{v_i} \varphi + b_i \partial_{v_i} f \varphi- G \varphi  \dvxt = \\
&= \int \left\{ (\partial_t + v \cdot \nabla_x) f - \partial_{v_i} (a_{ij}(t,x,v) \partial_{v_j} f) + b_i \partial_{v_i} f \varphi - G  \right\} \varphi \dvxt \\
& \phantom{=} - \int_{\{ x \in \partial \Omega\}} f \varphi (n \cdot v) \dd v \dd S(x) \dd t
\end{align*}
For the boundary integral, we have $\varphi=0$ on $\gamma_+$ and $f = g$ on $\gamma_-$.

We remark that Definition \ref{d:weak-sol} is equivalent to stating that $f \in \Hk(D)$, \eqref{e:kFP} holds in the sense of distributions, and \eqref{e:weak-boundary} holds.

\subsection{Extending subsolutions as zero}

Another way to interpret the boundary condition $f = 0$ on $\gamma_-$ for weak solutions is by extending $f$ as zero and observing that it will satisfy the equation across $\gamma_-$. The next lemma shows how these definitions are equivalent.

\begin{lemma} \label{l:extending-as-zero}
Let $D$ be an open subset of $\R \times \Omega \times \R^d$ and $f: D \to \R$. Let $D \subset \tilde D \subset \R^{1+2d}$. Assume that $\tilde D \cap \partial D \subset \gamma_-$. Let us extend the function $f$ to all of $\tilde D$ as zero. We also extend the vector field $b$ and the coefficients $a_{ij}$ in any arbitrary way (maintaining the equation parameters), and the source term $G$ as zero.  If $f$ is a weak solution of \eqref{e:kFP} in the sense of Definition \ref{d:weak-sol} with $f=0$ on $\gamma_-$, then the extended function satisfies the equation \eqref{e:kFP} in the sense of distributions in all of $\tilde D$.

Moreover, when all of $\gamma_- \cap D$ is contained in $\tilde D$, the two statements are equivalent.
\end{lemma}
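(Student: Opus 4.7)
The forward direction is a direct unpacking of the definitions. For any $\psi \in C^1_c(\tilde D)$, the hypothesis $\tilde D \cap \partial D \subset \gamma_-$ gives $\supp \psi \cap \partial D \subset \gamma_- \subset \gamma_- \cup \gamma_0$, so $\psi$ qualifies as a test function in Definition \ref{d:weak-sol}. Because $\tilde f \equiv 0$ and $\tilde G \equiv 0$ on the open set $\tilde D \setminus \overline D$, the weak derivative $\nabla_v \tilde f$ also vanishes there, so the distributional integral over $\tilde D$ collapses to
\[ \int_D \bigl[-f \trp \psi + a_{ij} \partial_{v_j} f \, \partial_{v_i}\psi + b_i \partial_{v_i} f \, \psi - G \psi\bigr] \dvxt. \]
Applying Definition \ref{d:weak-sol} with $g=0$ (which kills the boundary contribution on $\gamma_-$) sets this integral equal to zero, yielding the distributional identity on all of $\tilde D$.

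For the converse, assume that all of $\gamma_- \cap \partial D$ is contained in $\tilde D$. Given any admissible test function $\varphi \in C^1_c(\R^{1+2d})$ for Definition \ref{d:weak-sol}, the only obstruction to plugging it directly into the distributional identity on $\tilde D$ is that $\supp \varphi$ may intersect $\gamma_0 \cap \partial D$, a set which need not lie in $\tilde D$. The plan is to multiply $\varphi$ by a smooth cut-off $\eta_\eps$ that vanishes in an $\eps$-neighborhood of $\gamma_0 \cap \partial D$ and equals one outside a $2\eps$-neighborhood. Then $\varphi \eta_\eps \in C^1_c(\tilde D)$, and testing the distributional identity against it produces the integral identity of Definition \ref{d:weak-sol} (with $g=0$) plus correction terms coming from $\trp \eta_\eps$ and $\nabla_v \eta_\eps$ via the Leibniz rule. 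The terms without derivatives of $\eta_\eps$ converge by dominated convergence to the desired target integral.

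The main obstacle is showing that the correction terms vanish as $\eps \to 0$. Since $\gamma_0$ has codimension two in $\R^{1+2d}$ (one codimension from $\partial \Omega$, one from $v \cdot n = 0$), a tubular neighborhood of $\gamma_0 \cap \partial D$ of radius $\eps$ has measure $\lesssim \eps^2$, while on it one can arrange $|\trp \eta_\eps|, |\nabla_v \eta_\eps| \lesssim \eps^{-1}$. Cauchy--Schwarz then bounds each correction by a uniform constant times $\|f\|_{L^2(\mathrm{nbhd})}$ or $\|\nabla_v f\|_{L^2(\mathrm{nbhd})}$, and both tend to zero by dominated convergence since $f, \nabla_v f \in L^2(D)$ and the neighborhoods shrink to a null set. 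This is exactly the mechanism from the proof of Lemma \ref{l:boundary-loss}, and together with the forward direction it establishes the claimed equivalence.
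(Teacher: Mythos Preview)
Your forward direction matches the paper's one-line argument exactly: once you note that any $\psi \in C^1_c(\tilde D)$ is an admissible test function for Definition~\ref{d:weak-sol} (since $\supp\psi\cap\partial D\subset\tilde D\cap\partial D\subset\gamma_-$), the integrals over $D$ and $\tilde D$ coincide because the extension vanishes outside $D$.

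For the converse, the paper says only that the proof is a ``straightforward verification'' and supplies no further detail; your treatment is considerably more careful. Your identification of the obstruction at $\gamma_0$ is correct, and your cut-off argument---using that $\gamma_0$ has codimension two so that an $\eps$-tube has volume $\lesssim \eps^2$ while the derivatives of $\eta_\eps$ are $\lesssim \eps^{-1}$---is the right mechanism, indeed the same one driving Lemma~\ref{l:boundary-loss}.

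One small technical point: the claim ``then $\varphi\eta_\eps \in C^1_c(\tilde D)$'' is not literally true, since $\supp\varphi$ may protrude into $\R^{1+2d}\setminus\tilde D$ away from $\overline D$. This is harmless, however: because $\tilde f$, $\nabla_v\tilde f$ and $\tilde G$ all vanish on $\tilde D\setminus D$, the integral over $\tilde D$ depends only on the restriction of the test function to $\overline D$. After your cut-off, $\supp(\varphi\eta_\eps)\cap\overline D$ is a compact subset of $D\cup(\gamma_-\cap\partial D)\subset\tilde D$ (this is where the extra hypothesis in the ``moreover'' clause is used), so you may further multiply by a bump function equal to $1$ on a neighborhood of this compact set and supported in $\tilde D$ without altering the integral. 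With that minor adjustment your argument is complete and in fact more detailed than what the paper provides.
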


\begin{proof}
The proof is a straighforward verification that both definitions coincide.

Let $\varphi \in C^1_c(D)$ be any test function. Since $f$ and $G$ are extended as zero, we verify that
\[ \begin{aligned}
\int_{D} - f (\partial_t + v \cdot \nabla_x) \varphi + a_{ij} \partial_{v_j} f \partial_{v_i} \varphi + b_i \partial_{v_i} f \varphi - G \varphi  \dvxt = \\
\int_{\tilde D} - \tilde f (\partial_t + v \cdot \nabla_x) \varphi + a_{ij} \partial_{v_j} \tilde f \partial_{v_i} \varphi + b_i \partial_{v_i} \tilde f \varphi - G \varphi  \dvxt. 
\end{aligned}
\]
\end{proof}

Similarly as in Definition \ref{d:weak-sol}, we give a notion of \emph{weak subsolution}. Since we only plan to study nonnegative subsolutions with zero influx boundary conditions, we restrict the definition to that class of functions. This is convenient because of Lemma \ref{l:boundary-loss}.

\begin{definition} \label{d:weak-sub-sol}
Let $D$ be an open subset of $\R \times \Omega \times \R^d$. We say that a \textbf{nonnegative} function $f \in L^2(D)$ is a weak subsolution of \eqref{e:kFP} in $D$, with $f=0$ on $\gamma_-$, if $\nabla_v f \in L^2(D)$, 
and for any \textbf{nonnegative} test function $\varphi \in C^1_c(\R^{1+2d})$ so that $\supp \varphi \cap \partial D \subset \gamma$, we have
\[ \int - f (\partial_t + v \cdot \nabla_x) \varphi + a_{ij} \partial_{v_j} f \partial_{v_i} \varphi + b_i (\partial_{v_i} f) \varphi  - G \varphi  \dvxt \leq 0.\]
\end{definition}

Note that the test function $\varphi$ in Definition \ref{d:weak-sub-sol} is not required to vanish on $\gamma_+$. A nonnegative solution $f$, with $g=0$, is also a subsolution in the sense of Definition \ref{d:weak-sub-sol} thanks to Lemma \ref{l:boundary-loss}.

\begin{lemma} \label{l:extending-sub-as-zero}
Let $D$ be an open subset of $\R \times \Omega \times \R^d$ and $f: D \to \R$. Let $D \subset \tilde D \subset \R^{1+2d}$. Assume that $\tilde D \cap \partial D \subset \gamma_-$. Let us extend the function $f$ to all of $\tilde D$ as zero (and call it $\tilde f$). We also extend the vector field $b$ and the coefficients in any arbitrary way (maintaining the equation parameters), and the source term $G$ as zero. If $f$ is a nonnegative weak subsolution of \eqref{e:kFP} in the sense of Definition \ref{d:weak-sub-sol}, with $f = 0$ on $\gamma_-$, then the extended function satisfies the inequality in the sense of distributions
\[ (\partial_t + v\cdot \nabla_x) f - \partial_{v_i} (a_{ij}(t,x,v) \partial_{v_j} f) + b_i \partial_{v_i} f \leq G  \text{ in all } \tilde D.\]

Moreover, when all of $\gamma_- \cap D$ is contained in $\tilde D$, the two statements are equivalent.
\end{lemma}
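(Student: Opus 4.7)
The argument follows the template of Lemma \ref{l:extending-as-zero}, namely a direct matching of integrals, with extra care taken for the broader class of test functions permitted by Definition \ref{d:weak-sub-sol}.

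\textbf{Forward direction.} Let $\psi \in C^1_c(\tilde D)$ be nonnegative. Extended by zero to $\R^{1+2d}$, the function $\psi$ is an admissible test function in Definition \ref{d:weak-sub-sol}, because $\supp \psi \cap \partial D \subset \tilde D \cap \partial D \subset \gamma_- \subset \gamma$. Since $\tilde f$ and $\tilde G$ vanish outside $D$, each term in the distributional inequality over $\tilde D$ reduces to the corresponding integral of $f$, $G$ over $D$. The key verification is that the distributional derivatives $\partial_{v_i} \tilde f$ and $\trp \tilde f$ on $\tilde D$ agree with the zero-extensions from $D$. For $\partial_{v_i} \tilde f$ this is automatic, because the outward normal to $\partial D \cap \tilde D \subset \gamma_-$ has no $v$-component, so the jump of $\tilde f$ across $\gamma_-$ contributes no singular term. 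For $\trp \tilde f$, the potential boundary contribution $\int_{\partial D} f\,\psi\,(v\cdot n)\,\mathrm{d}\gamma$ vanishes, because the subsolution hypothesis forces $f = 0$ on $\gamma_-$ in the trace sense of Proposition \ref{p:trace} (combining the nonnegativity of $f$ with testing Definition \ref{d:weak-sub-sol} against $\varphi$'s that do not vanish on $\gamma_-$).

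\textbf{Backward direction.} Under the hypothesis that the $\gamma_-$-portion of $\partial D$ lies inside $\tilde D$, take a nonnegative $\varphi \in C^1_c(\R^{1+2d})$ with $\supp \varphi \cap \partial D \subset \gamma$. Via a smooth partition of unity, decompose $\varphi = \varphi_1 + \varphi_2$, with $\varphi_1$ supported in a neighborhood of $\gamma_-$ lying inside $\tilde D$, and $\varphi_2$ vanishing in a neighborhood of $\gamma_-$ (so $\supp \varphi_2 \cap \partial D \subset \gamma_0 \cup \gamma_+$). The integral inequality for $\varphi_1$ follows by testing the extended distributional inequality on $\tilde D$ against $\varphi_1 \in C^1_c(\tilde D)$. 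For $\varphi_2$, the inequality first holds for test functions compactly supported inside $D$ (since $C^1_c(D) \subset C^1_c(\tilde D)$), and is then upgraded to $\varphi_2$ via Lemma \ref{l:boundary-loss}.

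\textbf{Main obstacle.} The backward direction is the delicate step, because Definition \ref{d:weak-sub-sol} admits nonnegative test functions that do not vanish on $\gamma_+$, whereas the distributional inequality on $\tilde D$ only sees test functions in $C^1_c(\tilde D)$. Lemma \ref{l:boundary-loss} bridges this gap, and its hypothesis that $\trp f$ decomposes as an $L^2_{t,x} H^{-1}_v$ term plus a signed measure of locally finite total variation is obtained directly from the PDE inequality itself, since the subsolution property expresses $\trp f$ as the sum of the divergence of an $L^2$ vector field and a nonpositive distribution.
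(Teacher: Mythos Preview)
The paper's own proof is a single sentence: it is a ``straightforward verification like that of Lemma \ref{l:extending-as-zero}''. Your proposal follows exactly that template and is essentially correct, but one piece of your forward direction is both unnecessary and slightly off. The weak formulations in Definition \ref{d:weak-sub-sol} and in the distributional inequality on $\tilde D$ already carry the transport term as $-\tilde f\,\trp\psi$, not as $(\trp\tilde f)\,\psi$; so once you note that $\tilde f$, $\tilde G$, and $\nabla_v\tilde f$ vanish outside $D$ (which you verify correctly), the integrals match term by term with no boundary contribution to discuss. Your appeal to Proposition \ref{p:trace} to force a vanishing trace on $\gamma_-$ is therefore superfluous, and in fact risky: a subsolution need not lie in $\Hk$, since the defect measure in $\trp f$ may prevent $\trp f\in L^2_{t,x}H^{-1}_v$, so Proposition \ref{p:trace} may not apply. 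Your backward direction, using a partition of unity together with Lemma \ref{l:boundary-loss} to handle test functions touching $\gamma_+\cup\gamma_0$, is a legitimate and more explicit argument than the paper's one-line remark.
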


The proof of Lemma \ref{l:extending-sub-as-zero} is a straight forward verification like that of Lemma \ref{l:extending-as-zero}.




It is useful to observe that when $\gamma_-$ is empty, then the boundary condition is irrelevant and the definitions given above reduce to the usual notion of solution in the sense of distributions.

\begin{lemma} \label{l:irrelevant-boundary}
Assume $\gamma_- \cap Q_1(\z) = \emptyset$. Let $f$ be a solution of \eqref{e:kFP} in $\HH_1(\z)$, in the sense of distributions. Then $f$ is also a solution in the sense of Definition \ref{d:weak-sol}.

Also, if $f$ is a nonnegative function and a subsolution of \eqref{e:kFP} in the sense of distributions. Then $f$ is also a nonnegative subsolution in the sense of Definition \ref{d:weak-sub-sol}.
\end{lemma}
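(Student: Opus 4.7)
The plan is to approximate an admissible test function $\varphi$ (as allowed by Definition \ref{d:weak-sol} or \ref{d:weak-sub-sol}, with $\supp \varphi \cap \partial D \subset \gamma$) by a compactly supported function $\varphi_\eps \in C^1_c(\HH_1(\z))$, apply the distributional hypothesis to $\varphi_\eps$, and pass to the limit. For the cutoff I would use
\[ \eta_\eps(t,x,v) := \tilde \eta_1\bigl( \dist(x, \partial \Omega)/\eps \bigr), \]
where $\tilde \eta_1 \colon [0, \infty) \to [0,1]$ is smooth, equal to $0$ on $[0,1]$ and to $1$ on $[2,\infty)$. Since $\partial \Omega \in C^{1,1}$, this is $C^1$ for small $\eps$. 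The key feature is that $\eta_\eps$ depends only on $x$, so $\partial_{v_i} \eta_\eps \equiv 0$ while $\trp \eta_\eps = v \cdot \nabla_x \eta_\eps = -(v\cdot n^*)\, \tilde \eta_1'(d/\eps)/\eps$ is supported in a thin inner layer of width $\eps$ along $\partial \Omega$ (with $d = \dist(x,\partial\Omega)$ and $n^*$ the outward normal at the projection). Setting $\varphi_\eps := \eta_\eps \varphi$, the assumption $\supp \varphi \cap \partial D \subset \gamma$ guarantees that $\varphi_\eps \in C^1_c(\HH_1(\z))$, and $\varphi_\eps \geq 0$ whenever $\varphi \geq 0$.

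Applying the distributional hypothesis to $\varphi_\eps$ gives an equality (solution case) or an inequality $\leq 0$ (subsolution case). Because $\partial_v \eta_\eps = 0$, the product rule yields the clean splitting
\[ I(\varphi_\eps) = \int_{\HH_1(\z)} \eta_\eps \bigl[ -f \trp \varphi + a_{ij}\partial_{v_j} f\, \partial_{v_i} \varphi + b_i (\partial_{v_i} f)\, \varphi - G \varphi\bigr] \dvxt - \int_{\HH_1(\z)} f \, \varphi \, \trp \eta_\eps \dvxt, \]
where $I(\psi)$ denotes the integral on the left-hand side of the weak formulation. The first term converges to $I(\varphi)$ by dominated convergence as $\eps \to 0$. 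For the second, $\eps^{-1}\tilde \eta_1'(d/\eps)$ is a one-dimensional approximation of $\delta_{\partial \Omega}$ with total mass $\int_0^\infty \tilde\eta_1'(s)\,ds = 1$; appealing to the trace from Proposition \ref{p:trace} together with the smooth approximation of Lemma \ref{l:smooth-approx} gives
\[ \int_{\HH_1(\z)} f \, \varphi \, \trp \eta_\eps \dvxt \longrightarrow -\int_\gamma f|_\gamma \, \varphi \, (v \cdot n) \dd \gamma. \]

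In the limit this yields $I(\varphi) = -\int_\gamma f|_\gamma \varphi (v \cdot n)\,d\gamma$ for solutions and $I(\varphi) \leq -\int_\gamma f|_\gamma \varphi (v \cdot n)\,d\gamma$ for subsolutions. The hypothesis $\gamma_- \cap Q_1(\z) = \emptyset$ then makes the right-hand side trivial to evaluate: on $\supp \varphi$ the integration runs over $\gamma_0 \cup \gamma_+$, with $v \cdot n = 0$ on $\gamma_0$. In the solution case, $\varphi$ vanishes on $\gamma_+$ by the admissibility condition of Definition \ref{d:weak-sol}, so the boundary integral is zero, matching the vanishing right-hand side of the definition (the term $-\int_{\gamma_-} g \varphi (v\cdot n)\,d\gamma$ is zero since $\gamma_- \cap \supp \varphi = \emptyset$). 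In the subsolution case, $f|_\gamma, \varphi \geq 0$ and $v \cdot n > 0$ on $\gamma_+$, so the boundary integral is $\geq 0$, giving $I(\varphi) \leq 0$ as required by Definition \ref{d:weak-sub-sol}.

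The main technical obstacle I expect is justifying the trace convergence $\int f \varphi \trp \eta_\eps \to -\int_\gamma f|_\gamma \varphi (v \cdot n)\,d\gamma$ for $f$ only in $\Hk$. A clean route is to verify that both functionals $f \mapsto \int f \varphi \trp \eta_\eps \, dz$ and $f \mapsto -\int_\gamma f|_\gamma \varphi (v \cdot n)\,d\gamma$ are bounded on $\Hk(\HH_1(\z))$ uniformly in $\eps$ (the former via an integration by parts exploiting $\eta_\eps \varphi = 0$ on $\partial D$, using the $L^2_{t,x}H^{-1}_v$ control on $\trp f$ and the fact that $\|\eta_\eps \varphi\|_{L^2_{t,x}H^1_v} \lesssim \|\varphi\|_{L^2_{t,x}H^1_v}$) and coincide in the limit on smooth $f$; density of smooth functions (Lemma \ref{l:smooth-approx}) then concludes.
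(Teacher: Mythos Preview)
Your approach works, but the paper's proof is considerably shorter and exploits a hidden consequence of the hypothesis: $\gamma_- \cap Q_1(\z) = \emptyset$ actually forces $\gamma_0 \cap Q_1(\z) = \emptyset$ as well, since any point $(t,x,v)\in\gamma_0$ with $v-\vz$ in the \emph{open} ball $B_1$ can be perturbed to $(t,x,v-\eps n)\in\gamma_-\cap Q_1(\z)$. Thus $\gamma\cap Q_1(\z)\subset\gamma_+$. For solutions, the admissibility condition $\supp\varphi\cap\partial D\subset\gamma_-\cup\gamma_0$ then means $\supp\varphi$ misses $\gamma$ entirely, so $\varphi\in C^1_c(\HH_1(\z))$ and the distributional identity applies verbatim---no cutoff, no trace. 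For subsolutions the paper simply invokes Lemma \ref{l:boundary-loss}.

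Your cutoff $\eta_\eps(\dist(x,\partial\Omega))$ is a legitimate alternative to the cutoff used inside Lemma \ref{l:boundary-loss}, and the feature $\partial_v\eta_\eps\equiv 0$ is a real simplification over that lemma's argument (which has to show weak $L^2_{t,x}H^1_v$ convergence of $\varphi_\eps$). One caution: your justification of the boundary limit via Proposition \ref{p:trace} and density in $\Hk$ presumes $\trp f\in L^2_{t,x}H^{-1}_v$, which is automatic for solutions but not for subsolutions (the defect measure may push $\trp f$ out of that space). You do not actually need the exact limit in the subsolution half: since $\supp\varphi\cap\gamma$ is compact in $\gamma_+$, one has $v\cdot n^*\geq c>0$ on the strip for small $\eps$, hence $\trp\eta_\eps\leq 0$ there and $\int f\varphi\,\trp\eta_\eps\leq 0$ directly, which suffices to pass to the limit without invoking the trace at all.
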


\begin{proof}
In the case of solutions, the lemma is straightforward since the test function $\varphi$ in Definition \ref{d:weak-sol} must vanish on $\gamma_+$. In this case, it means that $\varphi$ must vanish in all $\partial D$. The notion of weak solution in $D$ consists in testing with the same family of test functions.

In the case of subsolutions, Definition \ref{d:weak-sub-sol} involves test functions that do not necessarily vanish on $\gamma_+$. Definition \ref{d:weak-sub-sol} trivially implies that $f$ is a solution inside $D$ in the sense of distributions because it involves a larger family of test functions. The opposite implication follows from Lemma \ref{l:boundary-loss}.
\end{proof}

\subsection{Chain rule for weak solutions}

It will be important to compute the equation satisfied by functions of the form $\psi(f)$, for certain choices of $\psi$. The following lemma tells us that we can apply the usual chain rule to solutions in the sense of Definition \ref{d:weak-sol}.

\begin{lemma} \label{l:weak-chain-rule}
Let $f$ be a weak solution of \eqref{e:kFP} with boundary data \eqref{e:influx} in the sense of Definition \ref{d:weak-sol}. Assume $G$ is bounded. Let $\psi \in C^{2}(\R)$ and $\psi''$ bounded. Let $\varphi \in C^1_c(\R^{1+2d})$ be a test function so that $\supp \varphi \cap \partial D \subset \overline{\gamma_-}$. Then
\begin{equation} \label{e:renormalization}
 \begin{aligned}
&\int  -\psi(f) \trp \varphi + \psi''(f) (a_{ij} \partial_{v_i} f \partial_{v_j} f) \varphi \dvxt
\\
&+ \int a_{ij} \partial_{v_j} \psi(f) \partial_{v_i} \varphi + b_i \partial_{v_i} \psi(f) \varphi - G \psi'(f) \varphi \dvxt \\
&= -\int_{\gamma_-} \psi(g) \varphi (v \cdot n) \dd \gamma .
\end{aligned} 
 \end{equation} 
\end{lemma}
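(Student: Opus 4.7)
The plan is to substitute $\varphi_\eps := \psi'(f_\eps)\varphi$ as a test function in the weak formulation of $f$ from Definition \ref{d:weak-sol}, where $f_\eps$ is the Galilean mollification from Lemma \ref{l:smooth-approx}, and then pass to the limit $\eps \to 0$. The identity \eqref{e:renormalization} then arises algebraically by an integration-by-parts of the transport term that appears after the product rule.

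First, invoke Lemma \ref{l:smooth-approx} to obtain $f_\eps \in C^\infty$ with $f_\eps \to f$ in $\Hk$ on a neighbourhood of $\supp\varphi$, passing to a subsequence so that $f_\eps \to f$ and $\nabla_v f_\eps \to \nabla_v f$ pointwise almost everywhere. Since $\psi \in C^2$ and $f_\eps$ is smooth, $\varphi_\eps \in C^1_c$ with $\supp\varphi_\eps \subset \supp\varphi$, so it remains admissible. Expanding $\trp\varphi_\eps = \psi''(f_\eps)\trp f_\eps \cdot \varphi + \psi'(f_\eps)\trp\varphi$ and $\partial_{v_i}\varphi_\eps = \psi''(f_\eps)\partial_{v_i} f_\eps \cdot \varphi + \psi'(f_\eps)\partial_{v_i}\varphi$ produces six bulk integrands and the boundary term $-\int_{\gamma_-}g\psi'(f_\eps)\varphi(v\cdot n)\dd\gamma$.

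Now pass $\eps \to 0$. Boundedness of $\psi''$ combined with a.e.\ convergence gives strong convergence of $\psi^{(k)}(f_\eps)$ in the appropriate $L^p$ spaces; the $L^2$ convergence of $\nabla_v f_\eps$ and the $L^2_{t,x}H^{-1}_v$ convergence of $\trp f_\eps$ come from Lemma \ref{l:smooth-approx}; and the boundary convergence $f_{\eps|\gamma_-} \to g$ in $L^2_{loc}(\gamma,\omega)$ follows from Proposition \ref{p:trace} together with Lemma \ref{l:weak-trace-zero}. The only term that does not immediately match the right side of \eqref{e:renormalization} is $-\int f\psi''(f)\trp f\cdot\varphi\dvxt$; setting $\Phi(s) := s\psi'(s)-\psi(s)+\psi(0)$ so that $\Phi'(s)=s\psi''(s)$, one has $\trp\Phi(f)=f\psi''(f)\trp f$, and integrating by parts on $\HH_1(\z)$ (using $\varphi=0$ on $\gamma_+$ and $f_{|\gamma_-}=g$) yields $\int\Phi(f)\trp\varphi\dvxt - \int_{\gamma_-}\Phi(g)\varphi(v\cdot n)\dd\gamma$. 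Combining with the remaining term $-\int f\psi'(f)\trp\varphi\dvxt$ and using $f\psi'(f)-\Phi(f)=\psi(f)-\psi(0)$ along with $\int_{\HH_1(\z)}\trp\varphi\dvxt = \int_{\gamma_-}\varphi(v\cdot n)\dd\gamma$ absorbs the constant $\psi(0)$ into the boundary integral and delivers exactly \eqref{e:renormalization}.

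The main obstacle is justifying the limit of $\int f\psi''(f_\eps)\trp f_\eps\cdot\varphi\dvxt$, because $\trp f_\eps$ converges only weakly in $L^2_{t,x}H^{-1}_v$, so one needs strong $L^2_{t,x}H^1_v$ convergence of the factor $f\psi''(f_\eps)\varphi$; computing its $v$-derivative produces a term $f\psi'''(f_\eps)\nabla_v f_\eps\varphi$ that is not directly controlled under the hypothesis $\psi \in C^2$. I would resolve this by a two-step approximation: first establish \eqref{e:renormalization} for $\psi \in C^3$ with bounded derivatives up to third order (where the preceding argument goes through directly), then recover general $\psi \in C^2$ with bounded $\psi''$ by approximating $\psi$ uniformly on compacts by smoother $\psi_\delta$ and applying dominated convergence term by term in \eqref{e:renormalization}, using the linear growth of $\psi'$ and quadratic growth of $\psi$ together with the $L^2$ bounds on $f$ and $\nabla_v f$ and the trace estimate $|\varphi(v\cdot n)|\lesssim\omega$ on $\gamma_-$ from Proposition \ref{p:trace}.
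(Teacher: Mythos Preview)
Your approach differs from the paper's, and it contains a genuine gap that the paper's method sidesteps. The paper does \emph{not} insert $\psi'(f_\eps)\varphi$ into the weak formulation for $f$; instead it convolves the equation itself with $\eta_\eps$ to obtain a smooth identity
\[
\trp f_\eps - \partial_{v_i}\big(\eta_\eps\ast(a_{ij}\partial_{v_j}f)\big) + \eta_\eps\ast(b_i\partial_{v_i}f) = \eta_\eps\ast G,
\]
multiplies by $\psi'(f_\eps)\varphi$, and integrates by parts. The crucial advantage is that the transport term is then $\int \trp f_\eps\,\psi'(f_\eps)\varphi$, to which the \emph{classical} chain rule $\trp f_\eps\,\psi'(f_\eps)=\trp\psi(f_\eps)$ applies because $f_\eps$ is smooth; one never encounters a product of the form $f\cdot\trp f_\eps$.

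In your scheme that product is exactly what appears: the term $-\int f\,\psi''(f_\eps)\,\trp f_\eps\,\varphi$ requires pairing $\trp f_\eps\in L^2_{t,x}H^{-1}_v$ against $f\psi''(f_\eps)\varphi$, but the latter is not in $L^2_{t,x}H^1_v$ in general, since $\nabla_v\big(f\psi''(f_\eps)\varphi\big)$ contains $f\,\psi'''(f_\eps)\,\nabla_v f_\eps\,\varphi$, and $f\cdot\nabla_v f_\eps$ is only in $L^1$, not $L^2$. Assuming $\psi\in C^3$ with bounded $\psi'''$ does not cure this integrability defect. Your subsequent manipulation $\trp\Phi(f)=f\psi''(f)\trp f$ is circular: this \emph{is} the weak chain rule you are trying to prove (for the function $\Phi$), and moreover $\Phi''(s)=\psi''(s)+s\psi'''(s)$ is not even bounded under the stated hypotheses, so $\Phi$ would not fall under the lemma anyway. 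The natural repair is to replace $f$ by $f_\eps$ in the transport term from the outset, i.e., to work with the mollified equation as the paper does; the error $(f-f_\eps)\psi''(f_\eps)\trp f_\eps$ suffers from the same $L^1$-versus-$L^2$ obstruction and does not obviously vanish.
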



\begin{proof}
For any open cover of $D$, we can decompose the test function $\varphi$ into a sum of test functions supported inside each open set of the cover. Using the technique of section \ref{s:flattening}, we reduce the problem to verify the assertion locally around a flat boundary. We can then assume without loss of generality that $\Omega = \{ x_1 < 0 \}$ and $D = \HH_1(\bar z)$. Moreover, the function $\varphi$ is supported in $Q_1(\bar z)$ and $\varphi = 0$ on $\gamma_+$.


The function $f$ satisfies the equation \eqref{e:kFP} in the sense of distributions. Let us define the mollifications $f_\eps = \eta_\eps \ast f$ like in the proof of Lemma \ref{l:smooth-approx}. Recall that by choosing a function $\eta_\eps$ supported in $\{x_1 > 0, v_1 < 0, t>0\}$ we ensure that the values of $f_\eps(t,x,v)$, for $x_1 \leq 0$, depend only on values of $f$ in $\{ x_1 < 0\}$.

The equation \eqref{e:kFP} holds in the sense of distributions inside $\HH_1$. Let us convolve the whole expression with $\eta_\eps$. Recalling that convolution commutes with $\partial_{v_i}$ and $\trp$, we obtain
\[ \trp f_\eps - \partial_{v_i} \left( \eta_\eps \ast (a_{ij} \partial_{v_j} f) \right) = \eta_\eps \ast G.\]
We multiply this expression times $\psi'(f_\eps) \varphi$ and integrate by parts. We get
\begin{align*}
0 = \int_{\HH_1} & - \psi(f_\eps) \trp \varphi + \psi''(f_\eps) (\eta_{\eps}\ast(a_{ij} \partial_{v_j} f)) (\partial_{v_i} f_\eps) \varphi + \psi'(f_\eps) (\eta_{\eps}\ast(a_{ij} \partial_{v_j} f)) \partial_{v_i} \varphi \\
& + \eta_\eps \ast (b_i \partial_{v_i} f) \psi'(f_\eps) \varphi - (\eta_\eps \ast G) \psi'(f_\eps) \varphi \dvxt \\
+ \int_\gamma &\psi(f_\eps) \varphi (v \cdot n) \dd \gamma.
\end{align*}

Now, we want to take $\eps \to 0$ and pass to the limit every term in the integral expression. We observe the following.
\begin{itemize}
    \item $f_\eps \to f$ in $L^2_{loc}$. Consequently, also $\psi(f_\eps) \to \psi(f)$ in $L^1_{loc}$ and $\psi'(f_\eps) \to \psi'(f)$ in $L^2_{loc}$.
    \item $\psi''(f_\eps) \to \psi''(f)$ almost everywhere. Moreover, $\psi''(f_\eps)$ is uniformly bounded since we assumed that $\psi''$ is bounded.
    \item $\partial_{v_i} f_\eps \to \partial_{v_i} f$ in $L^2_{loc}$.
    \item $\eta_\eps \ast (a_{ij} \partial_{v_j} f) \to a_{ij} \partial_{v_j} f$ in $L^2_{loc}$.
    \item $\eta_\eps \ast (b_i \partial_{v_i} f) \to b_i \partial_{v_i} f$  in $L^2_{loc}$.
    \item $\eta_\eps \ast G \to G$ almost everywhere, and it is uniformly bounded.
\end{itemize}
Using these observations, we are able to pass to the limit every term in the first integral using the dominated convergence theorem. Thus
\begin{align*}
 \int &-\psi(f) \trp \varphi + \psi''(f) a_{ij} \partial_{v_i} f \partial_{v_j} f \varphi + a_{ij} \partial_{v_j} \psi(f) \partial_{v_i} \varphi \\
 &+b_i \partial_{v_i} \psi(f) - G \psi'(f) \varphi \dvxt= \lim_{\eps \to 0} \int_\gamma \psi(f_\eps) \varphi (v \cdot n) \dd \gamma.
\end{align*}
We are left to analyze the boundary term. Since $f \in \Hk$, we observe by duality that $\trp f_\eps \to \trp f$ at least weakly in $L^2_{t,x} H^{-1}_{v, \text{loc}}$. From Proposition \ref{p:trace}, we have that $f_\eps \to g$ strongly in $L^2_{loc}(\gamma,\omega)$ (since $f_{|\gamma_-} =g$). We assume that $\psi''$ is bounded, so $\psi(f_\eps) \to \psi(g)$ in $L^1(\omega)$.

Since $\varphi=0$ on $\gamma_+$, we have $|\varphi (v \cdot n)| \lesssim \omega$ on $\gamma$. We deduce the convergence of the boundary term.

\[ \lim_{\eps \to 0} \int_\gamma \psi(f_\eps) \varphi (v \cdot n) \dd \gamma = \int_\gamma \psi(g) \varphi (v \cdot n) \dd \gamma. \]


\end{proof}

We extend Lemma \ref{l:weak-chain-rule} as an inequality for subsolutions.

\begin{lemma} \label{l:sub-chain-rule}
Let $f$ be a nonnegative weak subsolution of \eqref{e:kFP} in $\HH_1(\z)$, with $f=0$ on $\gamma_-$, in the sense of Definition \ref{d:weak-sub-sol}. Assume $G$ is bounded. Let $\psi \in C^{2}(\R)$, with $\psi(0) = 0$, $\psi(f) \geq 0$ and $\psi''$ bounded. Let $\varphi \in C^1_c(Q_1)$ be any test function. Then
\begin{equation} 
 \begin{aligned}
\int -\psi(f) \trp \varphi + \psi''(f) (a_{ij} \partial_{v_i} f \partial_{v_j} f) \varphi + a_{ij} \partial_{v_j} \psi(f) \partial_{v_i} \varphi & \\
+ b_i \partial_{v_i} \psi(f) \varphi - G \psi'(f) \varphi &\leq 0 .
\end{aligned} 
\end{equation} 
\end{lemma}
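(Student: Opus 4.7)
The plan is to mimic the proof of Lemma \ref{l:weak-chain-rule}, but to start from the distributional inequality provided by Lemma \ref{l:extending-sub-as-zero} rather than from the weak-solution identity. First, I would enlarge $\HH_1(\z)$ slightly past $\gamma_- \cap Q_1(\z)$ into an open set $\tilde D$ (flattening $\partial\Omega$ as in Section \ref{s:flattening} if needed), extend $f$ by zero, extend $a_{ij}, b_i$ in a bounded way that preserves the ellipticity constants, and extend $G$ by zero. Lemma \ref{l:extending-sub-as-zero} then gives
\[
\trp \tilde f - \partial_{v_i}(a_{ij} \partial_{v_j} \tilde f) + b_i \partial_{v_i} \tilde f \leq G \quad \text{in } \mathcal{D}'(\tilde D).
\]
Since $\psi(0)=0$, the function $\psi(\tilde f)$ is the zero-extension of $\psi(f)$.

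Second, I would mollify with a one-sided Galilean mollifier $\tilde f_\eps = \eta_\eps \ast \tilde f$ as in Lemma \ref{l:smooth-approx}, so that for $\eps$ small the displayed inequality becomes a pointwise inequality on an open set compactly containing $\supp \varphi$. Multiplying by $\psi'(\tilde f_\eps)\varphi$, which is nonnegative by the sign conventions of Definition \ref{d:weak-sub-sol} (i.e.\ $\varphi\geq 0$ together with $\psi$ nondecreasing on the relevant range, as implied by $\psi(0)=0$ and $\psi(f)\geq 0$ under the natural monotonicity assumption that accompanies the De Giorgi-type applications of this lemma), and integrating by parts in $\tilde D$, no boundary contribution arises because $\supp \varphi$ sits in the interior of $\tilde D$. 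The smooth chain rule $\partial_{v_i}\psi(\tilde f_\eps) = \psi'(\tilde f_\eps)\partial_{v_i} \tilde f_\eps$ and $\trp\psi(\tilde f_\eps) = \psi'(\tilde f_\eps)\trp \tilde f_\eps$ reorganize each product into exactly the terms appearing in the conclusion.

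Third, I would pass to the limit $\eps \to 0$ using the same convergence list as in the proof of Lemma \ref{l:weak-chain-rule}: strong $L^2_{\mathrm{loc}}$ convergence of $\tilde f_\eps$, $\partial_{v_i}\tilde f_\eps$, $\eta_\eps \ast (a_{ij}\partial_{v_j}\tilde f)$, and $\eta_\eps \ast (b_i\partial_{v_i}\tilde f)$, together with almost-everywhere convergence and uniform boundedness of $\psi'(\tilde f_\eps)$, $\psi''(\tilde f_\eps)$, and $\eta_\eps \ast G$. Dominated convergence then hands off every term.

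The main obstacle is the sign condition for $\psi'(\tilde f_\eps)\varphi$: once this is secured, the rest is a strictly easier, one-sided version of Lemma \ref{l:weak-chain-rule}, simplified by the fact that the zero-extension absorbs the $\gamma_-$ boundary integral entirely, so the delicate trace convergence from Proposition \ref{p:trace} is not needed.
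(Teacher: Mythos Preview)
Your approach is the same one the paper sketches (it simply says to follow Lemma \ref{l:weak-chain-rule}, noting there is no boundary term), and your write-up is more explicit than the paper's. There is, however, one genuine oversight in your plan, together with a point you correctly flag.

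\textbf{The oversight.} You extend $f$ only past $\gamma_-$, invoking Lemma \ref{l:extending-sub-as-zero}. But the test function $\varphi\in C^1_c(Q_1(\z))$ is allowed to be nonzero on $\gamma_+\cup\gamma_0$, and those pieces of $\gamma$ remain on the boundary of your $\tilde D$. Hence your claim that ``$\supp\varphi$ sits in the interior of $\tilde D$'' is false in general, and an uncontrolled boundary contribution on $\gamma_+$ would reappear when you integrate by parts. The fix is to take $\tilde D = Q_1(\z)$, i.e.\ extend $\bar f$ by zero across \emph{all} of $\gamma$. This is justified not by Lemma \ref{l:extending-sub-as-zero} (whose hypothesis $\tilde D\cap\partial D\subset\gamma_-$ would then be violated) but directly from Definition \ref{d:weak-sub-sol}: there the nonnegative test functions may be nonzero on all of $\gamma$, so for any nonnegative $\varphi\in C^1_c(Q_1(\z))$ the integral over $\HH_1(\z)$ equals the integral over $Q_1(\z)$ with the zero-extended data, yielding the distributional inequality on all of $Q_1(\z)$. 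After that, any nonnegative Galilean mollifier works (the one-sided support from Lemma \ref{l:smooth-approx} is no longer needed), $\supp\varphi$ is genuinely interior, and there is no boundary term to track.

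\textbf{The sign of $\psi'$.} You are right that multiplying the mollified pointwise inequality by $\psi'(\bar f_\eps)\varphi$ preserves the direction only if this factor is nonnegative, and the stated hypotheses $\psi(0)=0$, $\psi(f)\geq 0$, $\psi''$ bounded do not by themselves force $\psi'\geq 0$ on $[0,\infty)$. In every use the paper makes of this lemma (notably Lemma \ref{l:convex-sub}, where $0\leq\psi_\eps'\leq 1$ by construction) the monotonicity does hold, so one should read it as an implicit standing hypothesis. Your identification of this as the only remaining obstacle is accurate.
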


\begin{proof}
The proof follows the same lines as the proof of Lemma \ref{l:weak-chain-rule}. This case is simpler because there is no boundary term and the test function $\varphi$ is not required to vanish on any part of $\gamma$.
%
\end{proof}

When there is no incoming boundary in $\HH_1(\z)$, we do not need to assume $\psi(0)=0$.

\begin{lemma} \label{l:sub-chain-rule-no-incoming}
Assume $\gamma_- \cap Q_1(\z) = \emptyset$. Let $f$ be a weak subsolution of \eqref{e:kFP} in $\HH_1(\z)$, in the sense of distributions. Assume $G$ is bounded. Let $\psi \in C^{2}(\R)$, with $\psi''$ bounded and $\psi(f) \geq 0$. Let $\varphi \in C^1_c(Q_1)$ be a test function compactly supported in $Q_1$. Then
\begin{equation} 
 \begin{aligned}
\int_{\HH_1(\z)} -\psi(f) \trp \varphi + \psi''(f) (a_{ij} \partial_{v_i} f \partial_{v_j} f) \varphi + a_{ij} \partial_{v_j} \psi(f) \partial_{v_i} \varphi & \\
+ b_i \partial_{v_i} \psi(f) \varphi - G \psi'(f) \varphi \dvxt &\leq 0 .
\end{aligned} 
\end{equation} 
\end{lemma}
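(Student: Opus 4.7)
My plan is to mirror the proof of Lemma \ref{l:sub-chain-rule}, which in turn follows the mollification strategy of Lemma \ref{l:weak-chain-rule}. The whole point of the hypothesis $\gamma_- \cap Q_1(\z) = \emptyset$ is that the boundary integral on $\gamma_-$ appearing in Lemma \ref{l:weak-chain-rule}, and killed in Lemma \ref{l:sub-chain-rule} by imposing $\psi(0)=0$, is simply not present here. Dropping the requirement $\psi(0)=0$ is then the only change relative to Lemma \ref{l:sub-chain-rule}.

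Concretely, I first use the change of variables of Section \ref{s:flattening} to reduce to $\Omega = \{x_1 < 0\}$, noting that the flattening preserves the hypothesis $\gamma_- \cap Q_1(\z) = \emptyset$. I then pick a mollifier $\eta$ supported in $\{t>0,\ x_1>0,\ v_1<0\}$, as in the proof of Lemma \ref{l:smooth-approx}, so that the Galilean convolution $f_\eps := \eta_\eps \ast f$ is smooth on $\HH_{1-\eps}(\z)$ and its values there depend only on values of $f$ inside $\HH_1(\z)$. The distributional subsolution inequality says that $\trp f - \partial_{v_i}(a_{ij}\partial_{v_j}f) + b_i\partial_{v_i}f - G$ equals $-\mu$ for some nonnegative Radon measure $\mu$ on $\HH_1(\z)$, so convolving with $\eta_\eps$ yields a pointwise inequality for $f_\eps$ on $\HH_{1-\eps}(\z)$. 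Multiplying by $\psi'(f_\eps)\varphi$ (nonnegative in the intended application to De Giorgi-type $\psi$), expanding with the classical product rule, and integrating produces the integrated form of the desired inequality for $\psi(f_\eps)$ tested against $\varphi$, modulo a boundary term produced by integrating $\trp \psi(f_\eps)$ by parts.

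The one delicate point is that boundary contribution, $\int_{\gamma\cap Q_1(\z)} \psi(f_\eps)\varphi\,(v\cdot n)\dd\gamma$: unlike in Lemma \ref{l:weak-chain-rule}, here $\varphi$ need not vanish on this piece of the boundary. The hypothesis $\gamma_- \cap Q_1(\z) = \emptyset$ forces $\gamma\cap Q_1(\z) \subset \gamma_+ \cup \gamma_0$, so $v\cdot n \geq 0$ there; together with $\psi(f_\eps)\geq 0$ and $\varphi \geq 0$, this boundary integral is nonnegative, and moving it to the other side of the inequality only strengthens the $\leq 0$ on the interior terms. This is essentially the cutoff-and-pass-to-the-limit mechanism of Lemma \ref{l:boundary-loss}, applied to $\psi(f)$. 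Passing $\eps \to 0$ is then routine: $f_\eps \to f$ and $\nabla_v f_\eps \to \nabla_v f$ strongly in $L^2_{loc}$, $\eta_\eps \ast (a_{ij}\partial_{v_j} f) \to a_{ij}\partial_{v_j} f$ and $\eta_\eps \ast (b_i \partial_{v_i} f) \to b_i \partial_{v_i} f$ in $L^2_{loc}$, and the almost-everywhere convergence of $\psi(f_\eps), \psi'(f_\eps), \psi''(f_\eps)$ combined with the uniform bound on $\psi''$ lets the dominated convergence theorem recombine every term into the stated integrand. The main obstacle is the boundary step just described; everything else is standard mollification bookkeeping already carried out in the proofs of Lemmas \ref{l:weak-chain-rule} and \ref{l:sub-chain-rule}.
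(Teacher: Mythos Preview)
Your proposal is correct and rests on the same two ingredients as the paper: mollification (as in Lemmas~\ref{l:weak-chain-rule}--\ref{l:sub-chain-rule}) to justify the chain rule, and the observation that on $\gamma \cap Q_1(\z) \subset \gamma_+ \cup \gamma_0$ the boundary contribution has a favorable sign. The paper packages the second step slightly differently---it first establishes the inequality for $\varphi$ compactly supported in the open set $\HH_1(\z)$ (so no boundary term arises at all), extends by density to $\varphi$ vanishing on $\gamma$, and then invokes Lemma~\ref{l:boundary-loss} as a black box to remove that constraint---whereas you carry the mollification up to the boundary and discard the nonnegative boundary integral $\int_{\gamma}\psi(f_\eps)\varphi\,(v\cdot n)\dd\gamma$ directly; the content is equivalent.
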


\begin{proof}
If the test function $\varphi$ is supported inside $\HH_1(\z)$, the result follows the steps of the proof of Lemma \ref{l:sub-chain-rule}. Once we established the proof for functions supported inside $\HH_1(\z)$, by density it implies that the inequality holds for $C^1$ functions that vanish on $\gamma$. Then we extend it to any test function supported in $Q_1$ using that $\gamma \cap Q_1 = \gamma_+ \cap Q_1$ and Lemma \ref{l:boundary-loss}.
\end{proof}

\begin{lemma} \label{l:convex-sub}
Let $f$ be a weak solution of \eqref{e:kFP} in $\HH_1(\z)$, with boundary condition \eqref{e:influx}, in the sense of Definition \ref{d:weak-sol}. For any $m \in \R$ so that $g-m \leq 0$ for every $z \in \gamma_-$, the function $(f-m)_+$ is a nonnegative weak subsolution of \eqref{e:kFP} with $G \one_{f > m}$ instead of $G$.
\end{lemma}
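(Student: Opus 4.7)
The plan is to apply the chain rule of Lemma \ref{l:weak-chain-rule} with $\psi(r) = (r-m)_+$. Since this $\psi$ is not $C^2$ with bounded second derivative, I approximate it by a family $\psi_\eps \in C^2(\R)$ of nonnegative convex functions with $\psi_\eps \equiv 0$ on $(-\infty, m]$, $\psi_\eps'' \geq 0$ bounded, $\psi_\eps(r) \to (r-m)_+$ uniformly, and $\psi_\eps'(r) \to \one_{r > m}$ pointwise. An explicit choice is $\psi_\eps(r) = \int_m^r \beta_\eps(s) \dd s$, where $\beta_\eps$ is a smooth nondecreasing approximation of $\one_{r > m}$ supported in $[m, \infty)$.

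I apply Lemma \ref{l:weak-chain-rule} to $f$ with $\psi_\eps$ and a nonnegative test function $\varphi \in C^1_c(\R^{1+2d})$ satisfying $\supp \varphi \cap \partial D \subset \overline{\gamma_-}$ (so $\varphi$ vanishes on $\gamma_+$). The boundary term $-\int_{\gamma_-} \psi_\eps(g) \varphi (v \cdot n) \dd \gamma$ vanishes because $\psi_\eps(g) = 0$ wherever $g \leq m$, which by hypothesis holds on $\gamma_-$. The term $\int \psi_\eps''(f)\, a_{ij} \partial_{v_i} f \partial_{v_j} f\, \varphi \dvxt$ is nonnegative (by uniform ellipticity, $\psi_\eps'' \geq 0$, and $\varphi \geq 0$), and may be dropped to produce an inequality. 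Letting $\eps \to 0$, every remaining term passes to the limit: $\psi_\eps(f) \to (f-m)_+$ in $L^2$, and $\psi_\eps'(f)\partial_{v_j} f \to \one_{f>m} \partial_{v_j} f = \partial_{v_j}(f-m)_+$ in $L^2$ by dominated convergence, using the standard fact that $\partial_v f = 0$ a.e.\ on $\{f = m\}$. This produces the subsolution inequality with $G \one_{f>m}$ in place of $G$, for test functions that vanish on $\gamma_+$.

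To upgrade to the wider class of test functions in Definition \ref{d:weak-sub-sol} (which may not vanish on $\gamma_+$), I invoke Lemma \ref{l:boundary-loss} applied to the nonnegative function $(f-m)_+$. The decomposition $\trp (f-m)_+ = -\mu + \zeta$ required by that lemma is read off from the interior inequality obtained above: set
\[
\zeta := \partial_{v_i}\bigl(a_{ij} \partial_{v_j}(f-m)_+\bigr) - b_i \partial_{v_i}(f-m)_+ + G \one_{f>m} \in L^2_{t,x} H^{-1}_v,
\]
and $\mu := \zeta - \trp (f-m)_+$, which is nonnegative as a distribution by the subsolution property, hence a locally finite Radon measure. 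The main obstacle is precisely this final step: verifying that $\mu$ has enough regularity near $\gamma_+$ for the dominated convergence argument in the proof of Lemma \ref{l:boundary-loss} to apply. Once this is done, combining the two lemmas yields Definition \ref{d:weak-sub-sol} for $(f-m)_+$ with right-hand side $G \one_{f>m}$.
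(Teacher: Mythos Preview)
Your approach matches the paper's almost exactly: approximate $(f-m)_+$ by a smooth convex $\psi_\eps$, apply Lemma~\ref{l:weak-chain-rule}, drop the nonnegative $\psi_\eps''$ term, and invoke Lemma~\ref{l:boundary-loss} to allow test functions that do not vanish on $\gamma_+$. The one substantive difference is the order of the last two steps, and this is precisely the source of the ``main obstacle'' you flag.

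You pass to the limit $\eps \to 0$ first and then try to apply Lemma~\ref{l:boundary-loss} to $(f-m)_+$; this forces you to verify that the defect measure $\mu = \zeta - \trp(f-m)_+$ has finite total variation on $\HH_1(\z)$, which is not immediate. The paper instead applies Lemma~\ref{l:boundary-loss} \emph{before} taking the limit, i.e.\ to $\psi_\eps(f)$. At that stage the identity from Lemma~\ref{l:weak-chain-rule} gives the distributional equation
\[
\trp \psi_\eps(f) = -\psi_\eps''(f)\, a_{ij}\partial_{v_i}f\,\partial_{v_j}f + \partial_{v_i}\bigl(a_{ij}\partial_{v_j}\psi_\eps(f)\bigr) - b_i\partial_{v_i}\psi_\eps(f) + G\psi_\eps'(f),
\]
so $\mu = \psi_\eps''(f)\, a_{ij}\partial_{v_i}f\,\partial_{v_j}f$ is an explicit nonnegative $L^1$ function (since $\psi_\eps''$ is bounded for each fixed $\eps$ and $\nabla_v f \in L^2$), and the finite-variation hypothesis of Lemma~\ref{l:boundary-loss} is trivially satisfied. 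One then obtains the subsolution inequality for $\psi_\eps(f)$ against \emph{all} nonnegative $\varphi \in C^1_c(Q_1(\z))$, and only afterward lets $\eps \to 0$. Swapping the order of these two steps removes your obstacle entirely.
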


\begin{proof}
Lemma \ref{l:weak-chain-rule} applies only for $\psi \in C^2$. In this case, the function $\psi(f) = (f-m)_+$ is not $C^2$. The trick of this proof is to use its convexity to drop the term that depends on $\psi''$, retaining an inequality.

For $\eps>0$, let $\psi_\eps(f)$ be the following approximation of $(f-m)_+$.
\[ \psi_\eps(f) := \begin{cases}
(f-m-\eps) &\text{if } f>m+2\eps, \\
0 &\text{if } f \leq m,\\
\text{something smooth and convex} &\text{otherwise}.
\end{cases}
\]
We further construct $\psi_\eps$ so that $0\leq \psi_\eps'(f) \leq 1$ for all values of $f$. Note that $\lim_{\eps \to 0} \psi_\eps(f) = (f-m)_+$. 

Applying Lemma \ref{l:weak-chain-rule}, for any nonnegative test function $\varphi$ that vanishes on $\gamma_+$ we get
\begin{equation} \label{pf:abs1}
\begin{aligned}
\int & -\psi_\eps(f) \trp \varphi + \psi_\eps''(f) (a_{ij} \partial_{v_i} f \partial_{v_j} f) \varphi + a_{ij} \partial_{v_j} \psi_\eps(f) \partial_{v_i} \varphi - G \psi_\eps'(f) \varphi \dvxt= 0 .
\end{aligned} 
 \end{equation}
The term $\psi_\eps''(f) (a_{ij} \partial_{v_i} f \partial_{v_j} f)$ is nonnegative because $\psi_\eps$ is convex. Therefore, we can drop it and retain an inequality.
\begin{equation} \label{pf:abs2} \begin{aligned}
\int & -\psi_\eps(f) \trp \varphi + a_{ij} \partial_{v_j} \psi_\eps(f) \partial_{v_i} \varphi - G \psi_\eps'(f) \varphi \dvxt \leq 0 .
\end{aligned} 
\end{equation}
The identity \eqref{pf:abs1} means that the following equation holds in the sense of distributions in $\HH_1(\z)$.
\[ \trp \psi_\eps(f) + \psi_\eps''(f) (a_{ij} \partial_{v_i} f \partial_{v_j} f) - \partial_{v_i} \left\{ a_{ij} \partial_{v_j} \psi_\eps(f) \right\} = G \psi_\eps'(f).\]
In particular, we can apply Lemma \ref{l:boundary-loss} and deduce that \eqref{pf:abs2} holds also for any $\varphi \geq 0$ that may not necessarily vanish on $\gamma_+$.

We proceed to take the limit as $\eps \to 0$ and finish the proof.
\end{proof}

When there is no incoming boundary in the domain, we do not need to assume anything about $\psi(g)$. The following corollary states explicitly that particular case of Lemma \ref{l:convex-sub}.

\begin{corollary} \label{c:convex-sub-no-influx}
Let $f$ be a weak solution of \eqref{e:kFP} in $\HH_1(\z)$, in the sense of Definition \ref{d:weak-sol}. Assume that $\gamma_- \cap Q_1(\z) = \emptyset$. For any $m \in \R$, the function $(f-m)_+$ is a nonnegative weak subsolution of \eqref{e:kFP} with $G \one_{f > m}$ instead of $G$.
\end{corollary}

\section{Estimates without boundary.}
\label{s:interior}

We review the methods that lead to interior H\"older estimates for kinetic Fokker-Planck equations like \eqref{e:kFP}. The results in this section are proved in \cite{polidoro2004,zhang2008,wangzhang2009,zhang2011,golse2019harnack,guerand2021quantitative}.

The first step in the De Giorgi method is an upper bound of the $L^\infty$ norm of a subsolution in terms of its $L^2$ norm. It is the interior version of our Theorem \ref{t:Linfty}. It was first proved in \cite[Theorem 1.2]{polidoro2004} using Moser's iteration, for a general family of ultraparabolic equations, but without a right-hand side. New proofs were given in \cite[Theorem 12]{golse2019harnack} and \cite[Proposition 12]{guerand2021quantitative} for kinetic equations with a right-hand side in $L^\infty$. Also, a version for more general Kolmogorov equations is given in \cite[Theorem 3.1]{anceschi2021note}. We state the result without proof in terms of our notation.

\begin{proposition} \label{p:interior-Linfty}
Let $f : Q_1 \to \R$ be a weak subsolution of the equation \eqref{e:kFP}. Then
\[ \esssup_{Q_{1/2}} f_+ \leq C \left( \|f_+\|_{L^2(Q_1)} + \|G\|_{L^\infty(Q_1)} \right),\]
for a constant $C$ depending on the ellipticity parameters of the coefficients, $\|b\|_{L^\infty}$, and dimension only.
\end{proposition}

Beyond Proposition \ref{p:interior-Linfty}, the standard proof of the interior H\"older continuity result follows after iterating a gain-of-oscillation lemma. 

Let us define
\[ Q^- := (-3/4,-1/2] \times B_{1/2^3} \times B_{1/2}.\]
The important property of $Q^-$ is that it is compactly contained in $Q_1$ and all its points are earlier in time than $Q_{1/2}$. The interior H\"older continuity of the solution is deduced using the following \emph{growth} lemma.

The following lemma is essentially the same as \cite[Lemma 18]{golse2019harnack} or \cite[Lemma 16]{guerand2021quantitative}. 

\begin{lemma} \label{l:growth-lemma}
For every $\mu>0$, there is an $\eps_0>0$ so that the following statement is true. Let $f : Q_1 \to \R$ be a subsolution of the equation \eqref{e:kFP}. Assume the following
\begin{itemize}
\item $f \leq 1$ in $Q_1$ 
\item $|\{f \leq 0\} \cap Q^-| \geq \mu$.
\item $\|G\|_{L^\infty(Q_1)} \leq \eps_0$.
\end{itemize}
Then \[ \esssup_{Q_{1/2}} f \leq 1-\theta,\]
for some $\theta>0$ depending on $\mu$, the ellipticity parameters of the coefficients, $\|b\|_{L^\infty}$, and dimension only.
\end{lemma}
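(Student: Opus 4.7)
The plan is to deduce the conclusion from two ingredients: the local $L^\infty$ estimate of Proposition \ref{p:interior-Linfty}, and a De Giorgi intermediate value lemma for kinetic equations, iterated in the standard way.

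First, for any threshold $m \in [0,1)$, Corollary \ref{c:convex-sub-no-influx} (applicable since $\gamma_- \cap Q_1 = \emptyset$ in this purely interior setting) tells us that $(f-m)_+$ is a nonnegative subsolution of \eqref{e:kFP} with source $G \one_{\{f>m\}}$. Applying Proposition \ref{p:interior-Linfty} to $(f - (1-\theta))_+$ on $Q_{1/2} \subset Q_1$ and using the pointwise bound $(f-(1-\theta))_+ \leq \theta$ coming from $f \leq 1$, we obtain
\[
\esssup_{Q_{1/2}} (f-(1-\theta))_+ \leq C \bigl( \theta \, |A_\theta|^{1/2} + \eps_0 \bigr), \qquad A_\theta := \{f > 1-\theta\} \cap Q_1.
\]
Hence the lemma will follow if we can arrange both $|A_\theta| \leq (4C)^{-2}$ and $\eps_0 \leq \theta/(4C)$, for then the right-hand side is at most $\theta/2$, giving $\esssup_{Q_{1/2}} f \leq 1 - \theta/2$.

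The second and harder ingredient is that $|A_\theta|$ can be made arbitrarily small by choosing $\theta$ small, regardless of the specific subsolution $f$. The standard mechanism is a level-set iteration based on the De Giorgi intermediate value lemma. Set $m_k := 1 - 2^{-k}$ and $a_k := |\{f > m_k\} \cap Q_1|$. The kinetic isoperimetric lemma (established, e.g., in \cite{golse2019harnack} and \cite{guerand2021quantitative}) asserts that whenever $a_k \geq \nu > 0$ together with the standing hypothesis $|\{f \leq 0\} \cap Q^-| \geq \mu$, one has a quantitative lower bound on the intermediate layer
\[
\bigl| \{m_k < f \leq m_{k+1}\} \cap Q_1 \bigr| \geq \beta(\mu,\nu) > 0.
\]
Since the intermediate layers for distinct $k$ are disjoint and contained in $Q_1$, they cannot accumulate beyond $|Q_1|$. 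Therefore, within at most $K = \lceil |Q_1|/\beta(\mu,\nu) \rceil$ steps we must have $a_K < \nu$. Choosing $\nu = (4C)^{-2}$ determines $K = K(\mu)$; we then set $\theta := 2^{-K}$ and $\eps_0 := \theta/(4C)$, which gives the claimed gain with $\theta/2$ (relabel $\theta \leftarrow \theta/2$).

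The main obstacle is the intermediate value lemma itself, which is the genuinely kinetic ingredient: an energy estimate on $(f - m_k)_+$ provides $L^2_{t,x} H^1_v$ control on the truncation, but to link this with a measure-theoretic statement one must combine it with the hypoelliptic gain of regularity (Proposition \ref{p:interior-hypoelliptic}) in order to effectively propagate the information from the past cylinder $Q^-$, where $f$ is small, to the present cylinder, where the superlevel set lives. Because this lemma is already available in the references cited in the excerpt, the present proof will essentially consist of quoting it and assembling the routine $L^\infty$-plus-iteration argument above.
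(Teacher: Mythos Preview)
The paper does not actually prove this lemma; it states it as a known result and attributes the proof to \cite{golse2019harnack} and \cite{guerand2021quantitative}. Your sketch reproduces precisely the argument found in those references: apply the interior $L^\infty$ bound (Proposition~\ref{p:interior-Linfty}) to the truncation $(f-(1-\theta))_+$, and reduce to showing that the superlevel set has small measure by iterating the De Giorgi isoperimetric/intermediate value lemma on dyadic levels. So you have in fact supplied more detail than the present paper does.

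One minor quibble: Corollary~\ref{c:convex-sub-no-influx} as stated applies to weak \emph{solutions}, whereas here $f$ is only a subsolution. The fact that $(f-m)_+$ remains a subsolution when $f$ is a subsolution follows by the same convex chain-rule approximation (cf.\ Lemma~\ref{l:sub-chain-rule-no-incoming} and the proof of Lemma~\ref{l:convex-sub}), and is in any case standard in the purely interior setting; but you should cite the appropriate tool.
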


\section{The upper bound for subsolutions}

In this short section we give the proof of Theorem \ref{t:Linfty}. As we will see, it is a relatively quick consequence of Lemma \ref{l:extending-sub-as-zero} together with the interior upper bound of Proposition \ref{p:interior-Linfty}.

\begin{proof}[Proof of Theorem \ref{t:Linfty}]

We start by extending $f$ to all of $Q_1(\z)$ as zero.
\[ \bar f(z) := \begin{cases}
f(z) & \text{if } z \in \HH_1(\z), \\
0 & \text{if } z \in Q_1(\z) \setminus \HH_1(\z). \end{cases}
\]

According to Lemma \ref{l:extending-sub-as-zero}, the function $\bar f$ is a subsolution of \eqref{e:kFP} in the weak sense in $Q_1(\z)$. We can then apply Proposition \ref{p:interior-Linfty} (translated by $z_0 \circ$) and get
\[ \esssup_{Q_{1/2}(\z)} \bar f \leq C \left( \|\bar f\|_{L^2(Q_1(\z))} + \|G\|_{L^\infty(Q_1(\z))} \right).\]
Observing that $\esssup_{Q_{1/2}(\z)} \bar f = \esssup_{\HH_{1/2}(\z)} f$ and that $\|\bar f\|_{L^2(Q_1(\z))} = \|f\|_{L^2(\HH_1(\z))}$, we conclude the proof.
\end{proof}

\section{H\"older continuity}

We move to the proof of Theorem \ref{t:Calpha}. We set up an improvement of oscillation scheme as is common for H\"older continuity proofs. The way the improvement of oscillation works differs depending on whether we analyze the solution away from the incoming boundary $\gamma_-$, or near it. We start by analyzing the solution away from $\gamma_-$.

\begin{lemma} \label{l:oscillation-away-from-influx}
Let $f: \HH_1(\z) \to \R$ be a bounded weak solution of the equation \eqref{e:kFP}. Assume there is no incoming boundary inside of $Q_1(\z)$, so $f$ does not satisfy any boundary condition. There exists $\theta>0$ and $\eps_0>0$, depending on the ellipticity parameters, $\|b\|_{L^\infty}$ and dimension only, so that if $\osc_{\HH_1(\z)} f \leq 1$ and $\|G\|_{L^\infty} \leq \eps_0$, then
\[ \osc_{\HH_1(\z)} f \leq 1-\theta. \]
\end{lemma}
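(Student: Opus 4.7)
The plan is to run the standard De Giorgi oscillation-decay argument, where the only wrinkle is that the underlying cylinder $Q_1(\z)$ is only partially covered by the domain $\HH_1(\z)$. The assumption $\gamma_- \cap Q_1(\z)=\emptyset$ is exactly what will let us extend by zero across the outflow/grazing boundary and reduce to the interior growth lemma. (I interpret the conclusion as $\osc_{\HH_{1/2}(\z)} f \le 1-\theta$, the $\HH_1$ on the right-hand side being a typo.)

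First I would normalize. By translating and swapping $f \mapsto 1-f$ if necessary, assume $f$ takes values in $[0,1]$ on $\HH_1(\z)$ and that the ``lower half'' set
\[
 E := \{(t,x,v) \in \HH_1(\z)\cap Q^-(\z) : f(t,x,v) \le 1/2\}
\]
satisfies $|E| \ge \tfrac12 |\HH_1(\z) \cap Q^-(\z)|$. Consider $h := (f-1/2)_+$. By Corollary \ref{c:convex-sub-no-influx} (which applies because $\gamma_- \cap Q_1(\z) = \emptyset$), $h$ is a nonnegative weak subsolution of \eqref{e:kFP} in $\HH_1(\z)$, with $G\one_{f>1/2}$ replacing $G$; in particular the right-hand side still has $L^\infty$ norm at most $\eps_0$.

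The next step is to produce a subsolution on the full cylinder $Q_1(\z)$. Define $\bar h := h$ on $\HH_1(\z)$ and $\bar h := 0$ on $Q_1(\z) \setminus \HH_1(\z)$, and extend the coefficients and source arbitrarily (keeping ellipticity and $L^\infty$ bounds) by constants outside $\HH_1(\z)$. I would verify that $\bar h$ is a subsolution of \eqref{e:kFP} in the sense of distributions on $Q_1(\z)$, exactly as in the proof of Theorem \ref{t:Linfty}. For a nonnegative test function $\varphi \in C_c^1(Q_1(\z))$, the hypothesis $\gamma_- \cap Q_1(\z)=\emptyset$ gives $\supp \varphi \cap \partial \HH_1(\z) \subset (\gamma_0 \cup \gamma_+)\cap Q_1(\z) \subset \gamma$, so $\varphi$ is an admissible test function in Definition \ref{d:weak-sub-sol} for $h$. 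The jump of $\bar h$ across $\partial \HH_1(\z) \cap Q_1(\z)$ lies in the $x$-direction and therefore introduces no singular contribution in the $\partial_{v_i}(a_{ij}\partial_{v_j}\bar h)$ or $b_i \partial_{v_i} \bar h$ terms; any implicit transport boundary contribution has the right sign since $\bar h \ge 0$ and $v\cdot n \ge 0$ on the relevant part of $\gamma$. The desired subsolution inequality on $Q_1(\z)$ then follows directly from the definition of weak subsolution for $h$ on $\HH_1(\z)$.

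Now I would apply the interior growth lemma (Lemma \ref{l:growth-lemma}, translated to $Q_1(\z)$) to $\tilde h := 2 \bar h$, which is a subsolution on $Q_1(\z)$ with values in $[0,1]$ and with right-hand side bounded by $2\eps_0$. The measure bound is
\[
 |\{\tilde h \le 0\}\cap Q^-(\z)| \ge |E| + |Q^-(\z)\setminus \HH_1(\z)| \ge \tfrac12 |Q^-(\z)\cap \HH_1(\z)| + |Q^-(\z)\setminus \HH_1(\z)| \ge \tfrac12 |Q^-(\z)|.
\]
Choosing $\eps_0$ equal to half the constant produced by Lemma \ref{l:growth-lemma} for $\mu = |Q^-|/2$, the lemma yields $\tilde h \le 1-\theta$ on $Q_{1/2}(\z)$, i.e.\ $f \le 1 - \theta/2$ on $\HH_{1/2}(\z)$. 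Combined with $f \ge 0$, we obtain $\osc_{\HH_{1/2}(\z)} f \le 1-\theta/2$.

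The main technical obstacle is the middle step: rigorously justifying that the zero extension of $h$ across $\gamma_+\cup \gamma_0$ is a subsolution on $Q_1(\z)$. The argument is essentially a rerun of the proof of Theorem \ref{t:Linfty}, and it is crucial here that Definition \ref{d:weak-sub-sol} allows test functions not vanishing on $\gamma_+$, and that $h$ is \emph{nonnegative} so that the boundary contribution points in the favorable direction. Everything else is bookkeeping: a suitable dichotomy between ``$f$ is small on half of $Q^-$'' and ``$f$ is large on half of $Q^-$'', followed by the translation/scaling needed to hit the hypotheses of Lemma \ref{l:growth-lemma} exactly.
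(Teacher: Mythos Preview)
Your proof is correct and follows essentially the same approach as the paper's: take the midpoint $m$, observe that the set where $(f-m)_+$ vanishes (including the complement of $\HH_1(\z)$ in $Q^-(\z)$) has measure at least $|Q^-|/2$, use Corollary~\ref{c:convex-sub-no-influx} to get a nonnegative subsolution, extend by zero to $Q_1(\z)$, and apply Lemma~\ref{l:growth-lemma}. Your direct justification of the zero extension across $\gamma_+\cup\gamma_0$ via Definition~\ref{d:weak-sub-sol} is exactly the content the paper invokes through Lemma~\ref{l:extending-sub-as-zero}, and your reading of the conclusion as $\osc_{\HH_{1/2}(\z)} f \le 1-\theta$ is the intended one.
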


\begin{proof}

Let $m = (\esssup_{\HH_1(\z)} f + \essinf_{\HH_1(\z)} f)/2$.

Recall the set $Q^-$ defined in section \ref{s:interior}. Let us consider the following two sets.

\begin{align*} 
Q_1^- &:= \{z \in Q^- : z \notin \HH_1(\z) \text{ or } f(z) \leq m \}, \\
Q_2^- &:= \{z \in Q^- : z \notin \HH_1(\z) \text{ or } f(z) \geq m \}.
\end{align*}

Since the union of these two sets equals $Q^-$, one of them must have measure $\geq |Q^-|/2$. Let us suppose it is $Q_1^-$. The case it is $Q_2^-$ is analyzed identically substituting $f$ with $-f$.

Consider the function $\tilde f = 2 (f-m)_+$. Since $\osc f \leq 1$, we must have $0 \leq \tilde f(z) \leq 1$ for all $z \in \HH_1(\z)$. Since there is no incoming boundary inside $Q_1$
we apply Corollary \ref{c:convex-sub-no-influx} to conclude that $\tilde f$ is a nonnegative subsolution with $G$ replaced by $G \one_{f>m}$. 

We extend $\tilde f$ as zero in the full cylinder $Q_1(\z)$ applying Lemma \ref{l:extending-sub-as-zero}. Note that $\tilde f(z)=0$ at any point so that either $f(z) \leq m$, or $x \notin \Omega$. This set has measure at least $|Q^-|/2$. We apply Lemma \ref{l:growth-lemma} to get that $\tilde f \leq 1 - \tilde \theta$ in $Q_{1/2}(\z)$ for some $\tilde \theta > 0$.

Going back to the original function $f$, we got that $f \leq (1-\tilde \theta)/2 + m$ in $\HH_{1/2}(\z)$, and therefore $\osc_{\HH_{1/2}(\z)} f \leq (1-\theta/2)$.
\end{proof}

By the standard iteration of rescalings of Lemma \ref{l:oscillation-away-from-influx}, we conclude that the solution $f$ must be H\"older continuous away from the incoming boundary $\gamma_-$.

\begin{corollary} \label{c:Holder-away-from-influx}
Let $f: \HH_1(\z) \to \R$ be a bounded weak solution of the equation \eqref{e:kFP}. Assume there is no incoming boundary inside of $Q_1(\z)$, so $f$ does not satisfy any boundary condition. Then
\[ \|f\|_{C^\alpha(\HH_{1/2}(\z))} \leq C \left( \|f\|_{L^\infty(\HH_1(\z))} + \|G\|_{L^\infty(\HH_1(\z))} \right), \]
for some constants $\alpha>0$ and $C$ depending on the ellipticity parameters, $\|b\|_{L^\infty}$ and dimension only.
\end{corollary}

The kinetic cylinders $Q_r(\z)$ (defined in the introduction) are oblique with respect to the spatial variable $x$. The way they intersect the domain $\{x \in \Omega\}$ is different when $\z \in \gamma_+$ or $\z \in \gamma_-$. If $\z \in \gamma_0$, then roughly half of $Q_r(\z)$ will be inside the domain $\{x \in \Omega\}$ and the other half outside. If $\z \in \gamma_+$, then $\vz \cdot n > 0$, so the cylinder $Q_r(\z)$ flows from inside $\Omega$. Recall that $\tz$ is the final time of $Q_r(\z)$. A larger proportion of $Q_r(\z)$ will be inside the domain when $\z \in \gamma_+$. Conversely, when $\z \in \gamma_-$, then $Q_r(\z)$ flows from the outside, and therefore a larger proportion of $Q_r(\z)$ will not belong to $\Omega$. See Figure \ref{f:in-vs-out}.

\begin{center}
\begin{figure}[ht!] \caption{These drawings represent the intersection of $Q_1(\z)$ with $\{x \in \Omega\}$. The horizontal variable is $x$ (one-dimensional) and the vertical variable is $t$. The variable $v$ is not drawn. The points to the left of the vertical axis represent $\{x \in \Omega\}$. The picture on the left is for the case $\z \in \gamma_-$, the center picture is for $\z \in \gamma_0$, and the picture on the right is for $\z \in \gamma_+$.} \label{f:in-vs-out} 
\setlength{\unitlength}{1cm} 
\begin{picture}(12,3)
\put(0.1,0.5){$x \in \Omega$}
\put(2.3,2){$x \notin \Omega$}
\put(4.3,2.7){$x \in \Omega$}
\put(8.5,2.3){$x \in \Omega$}
\includegraphics[width=12cm,height=3cm]{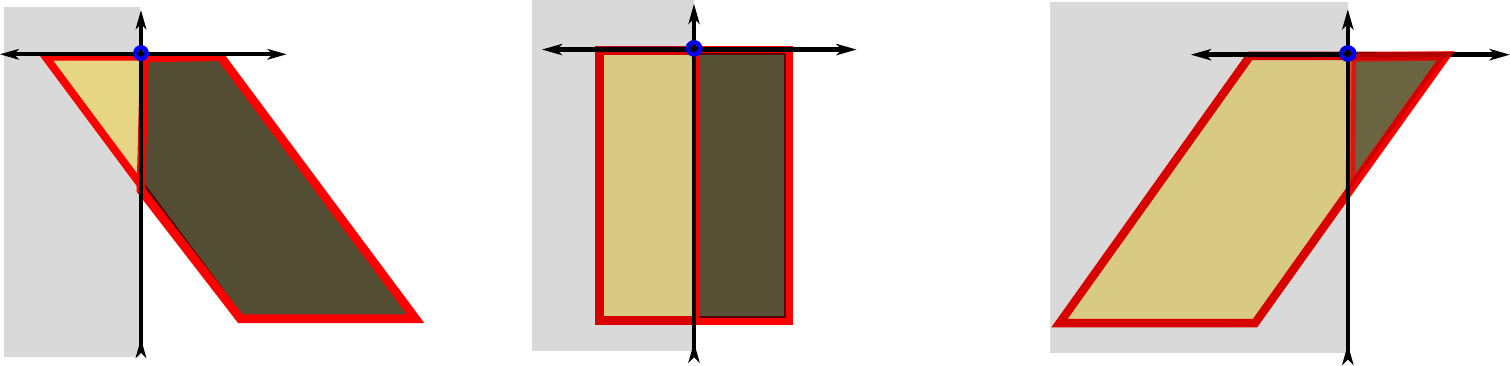}
\end{picture}
\end{figure}
\end{center}

Let us now consider the case in which the origin is close to some point on $\gamma_-$. The following lemma allows us to take advantage of the convexity assumption. It says that if there is any point in $\gamma_-$ sufficiently close to $\z$, then a large portion of $Q^-(\z)$ must be outside of the spatial domain $\Omega$.

\begin{lemma}\label{l:convex-and-Q-}
Assume that $\Omega$ is convex and that there is some point in $\gamma_- \cap Q_{1/8}(\z)$. Then, there is a $\mu > 0$ depending on dimension only so that
\[ | Q^-(\z) \cap \{x \notin \Omega\}| > \mu.\]
\end{lemma}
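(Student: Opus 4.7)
The plan is to use convexity to reduce the complement of $\Omega$ to a half-space, and then check by elementary computation that a definite portion of the tilted cylinder $Q^-(\z)$ lies on the ``bad'' side of the supporting hyperplane. The key observation is that the presence of a point in $\gamma_-$ near $\z$ forces $\vz \cdot n$ to be bounded above by a small constant, which keeps the tilt of $Q^-(\z)$ small enough that the half-space catches a nontrivial chunk.

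First, let $(\tilde t, \tilde x, \tilde v) \in \gamma_- \cap Q_{1/8}(\z)$ and set $n := n(\tilde x)$. By convexity, $\Omega \subset \{x : (x - \tilde x) \cdot n \leq 0\}$, so it suffices to prove
\[ \bigl|\{(t',x',v') \in Q^-(\z) : (x' - \tilde x) \cdot n > 0\}\bigr| \geq \mu. \]
Writing $t_0 := \tilde t - \tz$, $x_0 := \tilde x - \xz - t_0 \vz$, $v_0 := \tilde v - \vz$, and parametrizing a generic point of $Q^-(\z)$ as $(\tz + s, \xz + y + s \vz, \vz + u)$ with $(s,y,u) \in (-3/4, -1/2] \times B_{1/8} \times B_{1/2}$, the condition becomes $y \cdot n > \alpha(s)$, where
\[ \alpha(s) := x_0 \cdot n + (t_0 - s)(\vz \cdot n). \]

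The second step is the uniform bound $\alpha(s) < 1/8$. The definition of $Q_{1/8}(\z)$ gives $|t_0| < 1/64$, $|x_0| < 1/512$, $|v_0| < 1/8$, and therefore $t_0 - s \in (31/64, 3/4]$. Crucially, the assumption $\tilde v \cdot n < 0$ combined with $\tilde v = \vz + v_0$ yields
\[ \vz \cdot n < -v_0 \cdot n \leq |v_0| < 1/8. \]
Splitting according to the sign of $\vz \cdot n$ and plugging into the formula for $\alpha(s)$ gives $\alpha(s) \leq 1/512 + (3/4)(1/8) = 49/512 < 1/8$ in either case.

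To conclude, for every $s \in (-3/4, -1/2]$ the slice $\{y \in B_{1/8} : y \cdot n > \alpha(s)\}$ contains the fixed spherical cap $\{y \in B_{1/8} : y \cdot n > 49/512\}$, whose Lebesgue measure is some positive dimensional constant $\kappa_d$. Integrating over $u \in B_{1/2}$ and $s \in (-3/4, -1/2]$ yields $\mu := \kappa_d \cdot |B_{1/2}|/4 > 0$. The only substantive point is the estimate $\vz \cdot n < 1/8$: without convexity, a large fraction of $Q^-(\z)$ could lie inside $\Omega$ even with $\z$ arbitrarily close to $\gamma_-$, which is precisely why Theorem \ref{t:Calpha} degenerates in the non-convex case and why this lemma needs convexity to go through.
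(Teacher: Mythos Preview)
Your proof is correct and follows essentially the same approach as the paper: reduce to a supporting half-space via convexity, use the hypothesis to bound $\vz\cdot n < 1/8$, and then check that a fixed spherical cap of $B_{1/8}$ lies on the exterior side of the hyperplane for every time slice of $Q^-(\z)$. Your version is in fact slightly more careful than the paper's, since you place the supporting hyperplane at the boundary point $\tilde x$ rather than at $\xz$; this accounts for the extra $1/512$ in your threshold ($49/512$ versus the paper's $3/32$), but the argument is otherwise identical.
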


\begin{proof}
Let $z^1 \in \gamma_- \cap Q_{1/8}(\z)$. We write $z^1 = (t^1, x^1, v^1)$. Since $z^1 \in \gamma_-$, we know that $v^1 \cdot n < 0$, where $n$ is the unit normal vector to $\partial \Omega$ at $x^1$. Since $z^1 \in Q_{1/8}(\z)$, we have in particular that $\vz \cdot n = (\vz-v^1)\cdot n + v^1 \cdot n < 1/8$, where $\z  = (\tz, \xz, \vz)$.

We are assuming that $\Omega$ is convex. Therefore, we have
\[ \{ x_0+y \in \R^d: y \cdot n > 0 \} \subset \R^d \setminus \Omega.\]

Recall that
\[ Q^- = (-3/4,-1/2] \times B_{1/2^3} \times B_{1/2},\]
and thus
\[ \begin{aligned}
Q^-(\z) = \{ (t,x,v) : & t \in (\tz-3/4,\tz-1/2], \\
&x-\xz - (t-\tz) \vz \in B_{1/8}, \\
&v - \vz \in B_{1/2} \} 
\end{aligned} \]

We know $x \notin \Omega$ when $(x-\xz) \cdot n > 0$. Therefore
\[ \begin{aligned}
Q^-(\z) \cap \{x \notin \Omega\} \supset \{ (t,x,v) : & t-\tz \in (-3/4,-1/2], \\
&x-\xz - (t-\tz) \vz \in B_{1/8} \text{ and } (x-\xz) \cdot n > 0 , \\
&v - \vz \in B_{1/2} \} 
\end{aligned} \]
Since $\vz \cdot n < 1/8$ and $t-\tz \in  (-3/4,-1/2]$, $-(t-\tz) \vz \cdot n < 3/32$. Therefore, the conditions $x-\xz - (t-\tz) \vz \in B_1$ and $(x-\xz) \cdot n > 0$ will certainly hold when $\tilde x = x-\xz - (t-\tz) \vz \in B_{1/8}$ and $\tilde x \cdot n > 3/32$. Since $3/32 < 1/8$, these values of $\tilde x$ comprise a nonempty subset of the ball $B_{1/8}$. We conclude the proof of the lemma setting
\[ \mu = (3/4-1/2) \times |B_1 \cup \{\tilde x : \tilde x \cdot n > 3/32 \}| \times |B_1|. \]
\end{proof}

\begin{lemma} \label{l:oscillation-near-influx}
Let $f: \HH_1(\z) \to \R$ be a bounded weak solution of the equation \eqref{e:kFP} with the boundary condition \eqref{e:influx}. Assume that $\Omega$ is convex and that there is some point in $\gamma_- \cap Q_{1/8}(\z)$. Let $m = \sup_{\gamma_- \cap Q_1(\z)} g$. There exists $\theta>0$ and $\eps_0>0$, depending on the ellipticity parameters, $\|b\|_{L^\infty}$ and dimension only, so that if $f-m \leq 1$ in $\HH_1(\z)$ and $\|G\|_{L^\infty} \leq \eps_0$, then
\[ f-m \leq 1-\theta \text{ in } \HH_{1/2}(\z).\]
\end{lemma}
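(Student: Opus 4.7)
The plan is to imitate the proof of Lemma \ref{l:oscillation-away-from-influx}, but to replace the measure-theoretic dichotomy (where either $\{f \leq m\}$ or $\{f \geq m\}$ fills at least half of $Q^-$) with a purely geometric fact coming from convexity: a large portion of $Q^-(\z)$ lies outside $\Omega$, as guaranteed by Lemma \ref{l:convex-and-Q-}. Since the extension of a subsolution by zero across $\gamma_-$ is still a subsolution (Lemma \ref{l:extending-sub-as-zero}), this geometric fact translates directly into a large zero set for the extended subsolution, which is exactly what feeds the growth Lemma \ref{l:growth-lemma}.

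Concretely, set $\tilde f := (f-m)_+$. By the definition of $m = \sup_{\gamma_- \cap Q_1(\z)} g$, we have $g - m \leq 0$ on $\gamma_- \cap Q_1(\z)$, so Lemma \ref{l:convex-sub} applies and $\tilde f$ is a nonnegative weak subsolution of \eqref{e:kFP} in $\HH_1(\z)$ with source term $G \one_{f>m}$ and zero influx data. Extending $\tilde f$ by zero to all of $Q_1(\z) \setminus \HH_1(\z)$ via Lemma \ref{l:extending-sub-as-zero}, we obtain a nonnegative function $\bar f$ that is a weak subsolution of \eqref{e:kFP} on the entire kinetic cylinder $Q_1(\z)$, with $0 \leq \bar f \leq 1$ by the hypothesis $f - m \leq 1$ on $\HH_1(\z)$.

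The assumption that some point of $\gamma_-$ lies in $Q_{1/8}(\z)$, together with convexity of $\Omega$, allows us to invoke Lemma \ref{l:convex-and-Q-} to get $|Q^-(\z) \cap \{x \notin \Omega\}| \geq \mu$ for a dimensional constant $\mu>0$. Since $\bar f \equiv 0$ off $\Omega$, this gives
\[
|\{\bar f \leq 0\} \cap Q^-(\z)| \geq \mu.
\]
Choose $\eps_0$ to be the threshold associated with $\mu$ in Lemma \ref{l:growth-lemma}. Applying that lemma to $\bar f$ (translated by $\z$, which is legitimate because \eqref{e:kFP} is invariant under the Galilean group and Lemma \ref{l:growth-lemma} depends only on the equation parameters and dimension) yields $\bar f \leq 1-\theta$ on $Q_{1/2}(\z)$. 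Restricting back to $\HH_{1/2}(\z)$, we obtain $(f-m)_+ \leq 1-\theta$, hence $f - m \leq 1 - \theta$, as claimed.

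The only point requiring any care is that passing from $f$ to $(f-m)_+$ preserves the subsolution property \emph{with zero influx data}; that is exactly the content of Lemma \ref{l:convex-sub} and is why the constant $m$ was chosen as the supremum of $g$ over $\gamma_- \cap Q_1(\z)$. Beyond this, the argument is a direct combination of the subsolution-extension lemma, the geometric Lemma \ref{l:convex-and-Q-}, and the interior growth Lemma \ref{l:growth-lemma}, so no genuinely new difficulty arises. The main structural observation — and the reason the proof is so clean here — is that on the influx side the geometry of the domain automatically supplies the "zero measure" required by De Giorgi's growth lemma, so unlike the interior case (Lemma \ref{l:oscillation-away-from-influx}) we do not need to branch based on whether $f$ spends more time above or below its median.
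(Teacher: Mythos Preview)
Your proof is correct and follows essentially the same approach as the paper: apply Lemma \ref{l:convex-sub} to obtain that $(f-m)_+$ is a nonnegative subsolution with zero influx, extend it by zero to $Q_1(\z)$ via Lemma \ref{l:extending-sub-as-zero}, invoke Lemma \ref{l:convex-and-Q-} to produce the measure lower bound on $\{\bar f = 0\} \cap Q^-(\z)$, and conclude with Lemma \ref{l:growth-lemma}. Your additional commentary explaining why no dichotomy is needed here (unlike in Lemma \ref{l:oscillation-away-from-influx}) is accurate and helpful.
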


\begin{proof}
Using Lemma \ref{l:convex-and-Q-}, we know that the set $Q^-(\z) = \z \circ Q^-$ intersects $\{(t,x,v): x \notin \Omega\}$ in a set of measure at least $\mu$, for some $\mu > 0$ depending on dimension. 

Consider the function $\tilde f = (f-m)_+$. Since $f-m \leq 1$, we must have $0 \leq \tilde f(z) \leq 1$ for all $z \in \HH_1(\z)$. We apply Lemma \ref{l:convex-sub} to conclude that $\tilde f$ is a subsolution with $G$ replaced by $G \one_{f>0}$. 

We extend $\tilde f$ as zero in the full cylinder $Q_1(\z)$ applying Lemma \ref{l:extending-sub-as-zero}. Note that $\tilde f(z) = 0$ at every point $z(t,x,v)$ so that $x \notin \Omega$. In particular, such points in $Q^-(\z)$ have measure at least $\mu > 0$ because of Lemma \ref{l:convex-and-Q-}.

We apply Lemma \ref{l:growth-lemma} to get that $\tilde f \leq 1 - \tilde \theta$ in $Q_{1/2}(\z)$ for some $\tilde \theta > 0$, and finish the proof.
\end{proof}

We are now ready to write the proof of Theorem \ref{t:Calpha}. It follows by iteration of Lemmas \ref{l:oscillation-away-from-influx} and \ref{l:oscillation-near-influx}. We write the details below.

\begin{proof}[Proof of Theorem \ref{t:Calpha}]
Let us prove the H\"older continuity at the point $\z$. There is no difficulty to translate this proof to any other point in $\HH_{1/2}(\z)$. We assume $\xz \in \overline \Omega$. Note that $\z$ may or may not belong to the boundary $\gamma$.

We prove the convex case only. If $\Omega$ is not convex, we flatten the boundary with the procedure described in Section \ref{s:flattening} and reduce it to the convex case.

By scaling if necessary, we can assume that $\|b\|_{L^\infty} \leq 1$. Dividing the function $f$ by $\|f\|_{L^2(\HH_1(z^0))} + \|g\|_{C^\alpha(\gamma_- \cap Q_1(\z))}/\eps_0 + \|G\|_{L^\infty(\HH_1(\z))}/\eps_0$, we can and do assume that 
\begin{align*}
\|f\|_{L^2(\HH_1(z^0))} &\leq 1, \\
\|g\|_{C^\alpha(\gamma_- \cap Q_1(\z))} &\leq \eps_0, \\
\|G\|_{L^\infty(\HH_1(\z)} &\leq \eps_0.
\end{align*}

We pick $\alpha>0$ to be the number so that $2^{-\alpha} = (1-\theta/2)$, where $\theta>0$ is the minimum between the two positive constants in Lemmas \ref{l:oscillation-away-from-influx} and \ref{l:oscillation-near-influx}.

Under these assumptions, we want to prove that there is a constant $C$, depending on the ellipticity parameters, $\|b\|_{L^\infty}$ and dimension, so that for any $r \in (0,1/2)$, we have $\osc_{\HH_{r}(\z)} f \leq C r^\alpha$.

Applying Theorem \ref{t:Linfty} to $(f-\sup_{\gamma_- \cap Q_1(\z)} g)_+$ and $(f-\inf_{\gamma_- \cap Q_1(\z)} g)_-$, we deduce that
\[ \osc_{\HH_{1/2}} f \leq C_0, \]
for some constant $C_0$.

We now iterate either Lemma \ref{l:oscillation-near-influx} or Corollary \ref{c:Holder-away-from-influx} to estimate the oscillation of $f$ in $\HH_{2^{-k}}$ for $k=1,2,3,\dots$.

We claim that for as long as $\gamma_- \cap Q_{2^{-k-3}}(\z) \neq \emptyset$, we have
\begin{equation} \label{pf:o1}
\osc_{\HH_{2^{-k}}} f \leq C_0 (1-\theta/2)^{k-1}.
\end{equation}
We proceed to prove this inequality iterating Lemma \ref{l:oscillation-near-influx}.

Indeed, we already know this inequality holds for $k=1$. We prove it by induction for larger values of $k$. The inductive hypothesis tells us it holds for some given $k \geq 1$ and we assume that $\gamma_- \cap Q_{2^{-k-3}}(\z) \neq \emptyset$. Let $m_0 = \essinf_{\gamma_- \cap Q_{2^{-k}}(\z)} g$ and $m_1 = \esssup_{\gamma_- \cap Q_{2^{-k}}(\z)} g$. Since $\|g\|_{C^\alpha} \leq \eps_0$, we have $0 \leq m_1 - m_0 \leq \eps_0 2^{-k \alpha}$.

Let $M_0 = \esssup_{\HH_1(\z_0)} (f-m_0)_+$ and $M_1 = \esssup_{\HH_1(\z_0)} (m_1-f)_+$. We note that, using our inductive hypothesis,
\[ \osc_{\HH_{2^{-k}}} f = M_1 + M_0 + (m_1-m_0) \leq C_0 (1-\theta/2)^{k-1}. \]
We apply Lemma \ref{l:oscillation-near-influx} to $(f-m_0)/M_0$ and $(m_1-f)/M_1$ rescaled. We obtain that
\begin{align*}
\esssup_{\HH_{2^{-k-1}}} (f-m_0) \leq (1-\theta) M_0, \\
\esssup_{\HH_{2^{-k-1}}} (m_1-f) \leq (1-\theta) M_1.
\end{align*}
Therefore,
\[ \osc_{\HH_{2^{-k-1}}} f \leq (1-\theta)(M_0+M_1) + (m_1-m_0). \]
Using the inductive hypothesis and our bound for $m_1-m_0$,
\[ \osc_{\HH_{2^{-k-1}}} \leq (1-\theta)C_0 (1-\theta/2)^k + \eps_0 2^{-k \alpha} \leq (1-\theta)C_0 (1-\theta/2)^k + \eps_1 (1-\theta/2)^{k+1}. \]
The last inequality holds provided that $2^{-\alpha} \leq (1-\theta/2)$ and $\eps_1$ is some small number depending on $\eps_0$. Prodided that $\eps_0$ (and therefore also $\eps_1$) is sufficiently small, we conclude
\[ \osc_{\HH_{2^{-k-1}}} \leq C_0 (1-\theta/2)^{k+1}. \]
Thus, we proved \eqref{pf:o1} for as long as $\gamma \cap Q_{2^{-k-3}}$ is not empty. Eventually, there might be some value of $k=k_0$ so that $\gamma \cap Q_{2^{-k_0-3}}$ is empty. In that case, we trivially know that if we set $C_1 = (1-\theta)^{-4} C_0$, then the inequality
\begin{equation} \label{pf:o2}
\osc_{\HH_{2^{-k}}} f \leq C_1 (1-\theta/2)^{k-1}.
\end{equation}
holds up to $k=k_0+3$. From this point on, we iterate Lemma \ref{l:oscillation-away-from-influx} and conclude that \eqref{pf:o2} holds for all values of $k=1,2,3,\dots$. It is a standard fact that the H\"older continuity estimate follows from \eqref{pf:o2}. 
\end{proof}

\section{Vanishing of infinite order on the incoming boundary}
\label{s:Cinfty}

The key to understand why Theorem \ref{t:Cinfty} holds is that when $\z \in \gamma_-$, only a tiny proportion of $Q_r(\z)$ intersects $\{x \in \Omega\}$ for small $r>0$. Indeed, as pictured in Figure \ref{f:in-vs-out}, kinetic cylinders centered on the incoming part of the boundary consist mostly of points that are outside of the domain $\Omega$. As a consequence of the kinetic scaling, this effect is enhanced for small values of $r$. One could even argue that every point $\z \in \gamma_-$ is cusp-like in the sense that
\[ \limsup_{r \to 0} \frac{|Q_r(\z) \cap \{x \in \Omega\}|}{|Q_r(\z)|} = 0 .\]

The next lemma takes advantage of the intuition above. Theorem \ref{t:Cinfty} will follow as a consequence.

\begin{lemma} \label{l:high-decay-incoming}
Let $f: \HH_r(\z) \to \R$ be a bounded weak solution of the equation \eqref{e:kFP} with the boundary condition \eqref{e:influx}. Assume that $\Omega$ is convex and $\z \in \gamma_-$. For any $\delta>0$, there exist an $r_0 > 0$, depending on $\delta$ and $\vz \cdot n$ only, and $\eps_0>0$, depending on $\delta$ only, so that the following statement is true. If $g \leq m$ in $Q_r(\z) \cap \gamma_-$ for some $r \leq r_0$, then 
\[ \|(f-m)_+\|_{L^\infty(\HH_{r/2}(\z))} \leq \delta \|(f-m)_+\|_{L^\infty(\HH_r(\z))} + Cr^2 \|G\|_{L^\infty(\HH_r(\z))}.\]
\end{lemma}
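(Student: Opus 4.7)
The plan is to work with $(f-m)_+$, extend it by zero across $\gamma_-$ so that it becomes a subsolution on the full kinetic cylinder $Q_r(\z)$, and then apply the interior $L^\infty$ estimate of Proposition \ref{p:interior-Linfty}. The small factor $\delta$ on the right-hand side will come from the fact that, thanks to convexity and $\vz\cdot n<0$, only a tiny fraction of $Q_r(\z)$ actually sits inside $\{x\in\Omega\}$ when $r$ is small, so the $L^2$ norm of the extended function is much smaller than its $L^\infty$ norm.

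First, since $g\leq m$ on $\gamma_-\cap Q_r(\z)$, Lemma \ref{l:convex-sub} shows that $(f-m)_+$ is a nonnegative weak subsolution of \eqref{e:kFP} on $\HH_r(\z)$ with source $G\one_{f>m}$ and zero data on $\gamma_-$. By Lemma \ref{l:extending-sub-as-zero}, its extension $\bar f$ by zero to $Q_r(\z)$ is a weak subsolution across $\gamma_-$ in the full cylinder. Setting $M:=\|(f-m)_+\|_{L^\infty(\HH_r(\z))}$, we have $0\leq \bar f\leq M\,\one_{E_r}$, where $E_r:=Q_r(\z)\cap\{x\in\Omega\}$. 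The next step, which is the only nontrivial piece, is the geometric bound $|E_r|\leq C(r/|\vz\cdot n|)\,|Q_r(\z)|$ for $r\leq|\vz\cdot n|$. Let $n$ denote the outward unit normal to $\partial\Omega$ at $\xz$. Convexity of $\Omega$ gives the supporting half-space inclusion $\Omega\subset\{x:(x-\xz)\cdot n\leq 0\}$. For $(t,x,v)\in Q_r(\z)$ write $s=\tz-t\in[0,r^2]$ and $\tilde x=x-\xz-(t-\tz)\vz\in B_{r^3}$; then $x\in\Omega$ forces $s|\vz\cdot n|=-(t-\tz)(\vz\cdot n)\leq -\tilde x\cdot n\leq r^3$, so $s\leq r^3/|\vz\cdot n|$. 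Since the $v$ and $\tilde x$ variables range freely, Fubini yields $|E_r|\leq (r^3/|\vz\cdot n|)\,|B_{r^3}|\,|B_r|=(r/|\vz\cdot n|)\,|Q_r(\z)|$, which is meaningful only when $r<|\vz\cdot n|$, already forcing $r_0\leq|\vz\cdot n|$.

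To conclude, I combine the two previous steps with a rescaling and Proposition \ref{p:interior-Linfty}. We get $\|\bar f\|_{L^2(Q_r(\z))}\leq M|E_r|^{1/2}\leq CM(r/|\vz\cdot n|)^{1/2}|Q_r(\z)|^{1/2}$. Rescaling $\bar f$ to $Q_1$ via the kinetic scaling $S_r(t,x,v)=(r^2t,r^3x,rv)$ produces a subsolution on $Q_1$ with the same ellipticity parameters, drift $r\tilde b$ still bounded, source bounded by $r^2\|G\|_{L^\infty(\HH_r(\z))}$, and $L^2$ norm at most $CM(r/|\vz\cdot n|)^{1/2}$. Proposition \ref{p:interior-Linfty} then gives
\[
\|\bar f\|_{L^\infty(Q_{r/2}(\z))} \;\leq\; C(r/|\vz\cdot n|)^{1/2}M + Cr^2\|G\|_{L^\infty(\HH_r(\z))},
\]
and choosing $r_0\sim\delta^2|\vz\cdot n|$ makes the coefficient of $M$ at most $\delta$, which is the desired inequality. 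The main obstacle is the sharpness of the geometric estimate: the restriction $r\leq|\vz\cdot n|$ reflects that the kinetic skew of $Q_r(\z)$ along $\vz$ must beat the $r^3$-width of the spatial ball, and is exactly what forces $r_0$ to depend on $\vz\cdot n$. The argument as given does not actually use any smallness hypothesis on $G$, so $\eps_0$ can be chosen independently of $\delta$.
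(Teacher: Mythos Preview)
Your proof is correct and follows the same route as the paper: pass to $(f-m)_+$ via Lemma~\ref{l:convex-sub}, extend by zero across $\gamma$, apply the rescaled $L^2\to L^\infty$ bound, and get the small factor from the volume ratio $|\HH_r(\z)|/|Q_r|\to 0$, which you derive from convexity and $\vz\cdot n<0$. Your geometric estimate is actually more quantitative than the paper's (yielding the explicit $r_0\sim\delta^2|\vz\cdot n|$; note a harmless sign slip in the display---one has $(t-\tz)(\vz\cdot n)=s|\vz\cdot n|$, not its negative), and your closing observation that no smallness of $G$ is invoked is consistent with the paper's proof, which likewise never uses $\eps_0$.
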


\begin{proof}
The key of this lemma is to estimate the volume of the intersection of $\HH_r(\z)$. We claim that for $r < r_0$ (with depending on $\vz \cdot n$ and the distance between $\z$ and $\gamma_+$), the volume of $\HH_r(\z)$ is an arbitrarily small fraction of the volume of $Q_r(\z)$. Once we establish that, the result follows by applying Theorem \ref{t:Linfty} to $(f-m)_+$. Indeed, $(f-m)_+$ is a nonnegative subsolution in $\HH_r(\z)$ from Lemma \ref{l:convex-sub}. Applying Theorem \ref{t:Linfty} properly scaled, we get
\begin{align*}
\esssup_{\HH_{r/2}(\z)} (f-m)_+ &\leq C |Q_r|^{-1/2} \|(f-m)_+\|_{L^2(\HH_r(\z))} + Cr^2 \|G\|_{L^\infty(\HH_r(\z))}, \\
&\leq C |Q_r|^{-1/2} |\HH_r(\z)|^{1/2} \|(f-m)_+\|_{L^\infty(\HH_r(\z))} + Cr^2 \|G\|_{L^\infty(\HH_r(\z))}, \\
&\leq \delta \|(f-m)_+\|_{L^\infty(\HH_r(\z))} + Cr^2 \|G\|_{L^\infty(\HH_r(\z))}.
\end{align*}
where $\delta := C |Q_r|^{-1/2} |\HH_r(\z)|^{1/2}$ is small provided that $r < r_0$ is small.

Therefore, we are left to prove that we can make $|\HH_r(\z)| / |Q_r|$ arbitrarily small by choosing $r < r_0$ small.

Since $\Omega$ is convex, we know that the set
\[ \{ \xz + y \in \R^d : y \cdot n \geq 0 \} \]
does not intersect $\Omega$.

The map $(t,x,v) \mapsto \z \circ (r^2 t, r^3 x, rv)$ maps $Q_1$ to $Q_r(\z)$. Let $\OO_r(\z)$ be the pre-image by this map of $\HH_r(\z)$. We observe that $|\HH_r(\z)| / |Q_r| = |\OO_r(\z)| / |Q_1|$.

The set $\OO_r(\z)$ can be expressed explicitly by the formula
\[ |\{ (t,x,v) \in Q_1 : \xz + r x + t \vz \in \Omega \}| < \delta/C. \]

Since $\Omega$ is convex, we know that
\[ \Omega \subset \{ \xz + y \in \R^d : y \cdot n < 0 \}. \]
We assume that $\z \in \gamma_-$, thus $\vz \cdot n<0$. Recall that also $t<0$ in $Q_1$. Thus, if $r < r_0$,
\[ \OO_r(\z) \subset \{ (t,x,v) \in Q_1 : r_0 x \cdot n < -t \vz \cdot n \}. \]
For any given $t<0$, the quantity $-t \vz \cdot n$ is negative. The set on the right-hand side has an arbitrarily small measure provided that we pick $r_0$ small (depending on $v_0 \cdot n$ only). We pick $r_0>0$ so that its measure is sufficiently small and we finish the proof.
\end{proof}

The proof of Theorem \ref{t:Cinfty} follows by iterating Lemma \ref{l:high-decay-incoming} with $m=0$ and $G=0$.

\begin{proof}[Proof of Theorem \ref{t:Cinfty}]
We prove the convex case only. If $\Omega$ is not convex, we flatten the boundary with the procedure described in Section \ref{s:flattening} and reduce it to the convex case.

For any $q > 0$, let us pick $\delta < 1/2^q$. Let $r_0 > 0$ be the positive radius of Lemma \ref{l:high-decay-incoming}. 

Applying Lemma \ref{l:high-decay-incoming} iteratively to $f$ and $-f$ with the radii $r_0, r_0/2, r_0/4, r_0/8, \dots$, we obtain
\[ \|f\|_{L^\infty(\HH_{2^{-k} r_0}(\z))} \leq \delta^k \|f\|_{L^\infty(\HH_{r_0}(\z))}.\]
Therefore, for $r < r_0$ and $C = \delta^{-1}$, we have
\[ \|f\|_{L^\infty(\HH_{r}(\z))} \leq C r^q \|f\|_{L^\infty(\HH_{r_0}(\z))} \leq C r^q \|f\|_{L^\infty(\HH_{1}(\z))}.\]
\end{proof}

It is possible to modify Theorem \ref{t:Cinfty} to allow for nonzero boundary conditions and right-hand side. However, the decay of the function $f$ on the boundary will be limited by the scaling of the boundary data and right-hand side. There are several variants that one may get depending on the assumptions. In every case, we compute the regularity of $f$ on $\gamma_-$ by iterating Lemma \ref{l:high-decay-incoming}.

Let us analyze the case $G \neq 0$ but $g=0$. In this case, the direct iteration of Lemma \ref{l:high-decay-incoming} gives us
\begin{align*} 
\|f\|_{L^\infty(\HH_{2^{-k} r_0}(\z))} &\leq \delta^k \|f\|_{L^\infty(\HH_{r_0}(\z))} + \left( \delta^{k-1} r_0^2  + \delta^{k-2} (2^{-1} r_0)^2 + \dots + (2^{-k} r_0)^2 \right) \|G\|_{L^\infty} ,\\
&\leq \delta^k \|f\|_{L^\infty(\HH_{r_0}(\z))} + \left( (2\delta)^{k-1}  + (2\delta)^{k-2} + \dots + (2\delta) +  1 \right) (2^{-k} r_0)^2 \|G\|_{L^\infty} ,\\
&\leq \delta^k \|f\|_{L^\infty(\HH_{r_0}(\z))} + (1-2\delta)^{-1} (2^{-k} r_0)^2 \|G\|_{L^\infty}.
\end{align*}

Thus, for $r < r_0$, we get
\begin{equation} \label{e:cylinder-decay-on-incoming-2} 
\|f\|_{L^\infty(\HH_{r}(\z))} \leq C r^q \|f\|_{L^\infty(\HH_{r_0}(\z))} + C r^2 \|G\|_{L^\infty}.
\end{equation}
Observe that no matter how small we pick $\delta>0$, we do not get better than a quadratic exponent in the second term.\footnote{We would get a slightly better exponent if we replaced Proposition \ref{p:interior-Linfty} with a sharper version that takes into account the $L^p$ norm of $G$, instead of its $L^\infty$ norm. An interior estimate of that kind is considered in \cite{anceschi2021note}, in a more general context.}

If we also have a nonzero boundary condition $g \in C^\alpha$ (for any given $\alpha \in (0,1)$), following the same iteration of Lemma \ref{l:high-decay-incoming} we get
\begin{equation} \label{e:cylinder-decay-on-incoming}
\osc_{\HH_r(\z)} f \leq C r^q \|f\|_{L^\infty(\HH_{r_0}(\z))} + C r^2 \|G\|_{L^\infty} + C r^\alpha \|g\|_{C^\alpha},
\end{equation}
for all $r < r_0$. In this case, the dominant term for $r$ small will be the third one, that has exponent $r^\alpha$.

It is interesting to restate \eqref{e:cylinder-decay-on-incoming} in terms of the distance of any arbitrary point $z$ to the incoming boundary $\gamma_- \cap \{v \cdot n > \nu_0\}$. One would naively guess that the right-hand side in \eqref{e:cylinder-decay-on-incoming} corresponds to an estimate in terms of this distance to the power $\alpha$. However, we obtain a slightly smaller exponent for a reason that will be explained below.

Let us analyze the case $g=0$. Consider $z$ to be any point in the domain of the equation. In order to get the best possible upper bound for $f(z)$ using \eqref{e:cylinder-decay-on-incoming-2}, we should look for a point $\z \in \gamma_-$ so that $z \in \HH_r(\z)$ and $r>0$ is the smallest possible value. Note that, for small values of $r$, there is only a small proportion of the cylinder $Q_r(\z)$ inside $\HH_r(\z)$ (see the proof of Lemma \ref{l:high-decay-incoming}). The points of $\HH_r(\z)$ concentrate near $t=t_0$ for small $r$. We should therefore take $\tz = t$, and pick $(\xz, \vz)$ on the incoming part of the boundary and closest to $(x,v)$. Thus, our optimal choice is to take $(\xz, \vz)$ so that
\[ \max( |\xz-x|^{1/3}, |\vz-v|) = \min\{\max( |y-x|^{1/3}, |w-v|) : y \in \partial \Omega, w \in \R^d : n \cdot w < 0\}. \]
Nota that the set on the right-hand side is not closed. The infimum may be achieved on the boundary where $n \cdot w = 0$, so $\z \in \gamma_0$. Equation \eqref{e:cylinder-decay-on-incoming-2} does not apply and the best estimate we have in this case is the H\"older modulus of continuity given by Theorem \ref{t:Calpha}.

If we get $\z \in \gamma_0$ for the value of $\z$ defined above, we have $z \in \HH_r(\z)$ for $r = \max( |\xz-x|^{1/3}, |\vz-v|)$. We can then apply \eqref{e:cylinder-decay-on-incoming-2} and get
\[ \|f\|_{L^\infty(\HH_{r}(\z))} \leq C \max( |\xz-x|^{1/3}, |\vz-v|)^m \|f\|_{L^\infty} + C \max( |\xz-x|^{1/3}, |\vz-v|)^2 \|G\|_{L^\infty}.
\]
The constant $C$ depends on $\vz \cdot n$.

It is easy to verify that $\max( |\xz-x|^{1/3}, |\vz-v|) \leq d(z,\gamma_-)^{2/3}$ directly from Definition \ref{d:distance}. The exponent $2/3$ cannot be improved in general, as we can see in the following one-dimensional example: if $\Omega = \{ x < 0 \}$ and we take $(t,x,v) = (r^2,-r^2,-1)$, we have $d(z,\gamma_-) = d(z,(0,0,1)) = r$. However, the point $(r^2,-r^2,1)$ does not belong to $Q_r(0,0,1)$ (we instead have $(0,0,1) \in Q_r(z)$). In this example $\z = (r^2,0,1)$ and $\max( |\xz-x|^{1/3}, |\vz-v|) = r^{2/3}$.


\bibliography{kfpboundary}

\begin{thebibliography}{10}

\bibitem{albritton2019variational}
Dallas Albritton, Scott Armstrong, Jean-Christophe Mourrat, and Matthew Novack.
\newblock Variational methods for the kinetic {F}okker-{P}lanck equation.
\newblock {\em arXiv preprint arXiv:1902.04037}, 2019.

\bibitem{anceschi2021note}
Francesca Anceschi and Annalaura Rebucci.
\newblock A note on the weak regularity theory for degenerate {K}olmogorov
  equations.
\newblock {\em J. Differential Equations}, 341:538--588, 2022.

\bibitem{golse2019harnack}
Fran\c{c}ois Golse, Cyril Imbert, Cl\'{e}ment Mouhot, and Alexis~F. Vasseur.
\newblock Harnack inequality for kinetic {F}okker-{P}lanck equations with rough
  coefficients and application to the {L}andau equation.
\newblock {\em Ann. Sc. Norm. Super. Pisa Cl. Sci. (5)}, 19(1):253--295, 2019.

\bibitem{guerand2021log}
Jessica Guerand and Cyril Imbert.
\newblock Log-transform and the weak {H}arnack inequality for kinetic
  {F}okker-{P}lanck equations.
\newblock {\em Journal of the Institute of Mathematics of Jussieu}, pages
  1--26, 2021.

\bibitem{guerand2021quantitative}
Jessica Guerand and Cl\'{e}ment Mouhot.
\newblock Quantitative {D}e {G}iorgi methods in kinetic theory.
\newblock {\em J. \'{E}c. polytech. Math.}, 9:1159--1181, 2022.

\bibitem{guo2020}
Yan Guo, Hyung~Ju Hwang, Jin~Woo Jang, and Zhimeng Ouyang.
\newblock The {L}andau equation with the specular reflection boundary
  condition.
\newblock {\em Arch. Ration. Mech. Anal.}, 236(3):1389--1454, 2020.

\bibitem{MR3592757}
Yan Guo, Chanwoo Kim, Daniela Tonon, and Ariane Trescases.
\newblock Regularity of the {B}oltzmann equation in convex domains.
\newblock {\em Invent. Math.}, 207(1):115--290, 2017.

\bibitem{henderson2020c}
Christopher Henderson and Stanley Snelson.
\newblock {$C^\infty$} smoothing for weak solutions of the inhomogeneous
  {L}andau equation.
\newblock {\em Arch. Ration. Mech. Anal.}, 236(1):113--143, 2020.

\bibitem{imbert2020ems}
Cyril Imbert and Luis Silvestre.
\newblock Regularity for the {B}oltzmann equation conditional to macroscopic
  bounds.
\newblock {\em EMS Surv. Math. Sci.}, 7(1):117--172, 2020.

\bibitem{imbert2020weak}
Cyril Imbert and Luis Silvestre.
\newblock {The weak {H}arnack inequality for the {B}oltzmann equation without
  cut-off}.
\newblock {\em {Journal of the European Mathematical Society}}, 22(2):pp.
  507--592, 2020.

\bibitem{imbert2020schauder}
Cyril Imbert and Luis Silvestre.
\newblock The {S}chauder estimate for kinetic integral equations.
\newblock {\em Anal. PDE}, 14(1):171--204, 2021.

\bibitem{imbert2022jams}
Cyril Imbert and Luis Silvestre.
\newblock Global regularity estimates for the {B}oltzmann equation without
  cut-off.
\newblock {\em J. Amer. Math. Soc.}, 35(3):625--703, 2022.

\bibitem{zhang2008}
Zhang Liqun.
\newblock The {$C^\alpha$} regularity of a class of ultraparabolic equations.
\newblock In {\em Third {I}nternational {C}ongress of {C}hinese
  {M}athematicians. {P}art 1, 2}, volume~2 of {\em AMS/IP Stud. Adv. Math., 42,
  pt. 1}, pages 619--622. Amer. Math. Soc., Providence, RI, 2008.

\bibitem{polidoro2004}
Andrea Pascucci and Sergio Polidoro.
\newblock The {M}oser's iterative method for a class of ultraparabolic
  equations.
\newblock {\em Commun. Contemp. Math.}, 6(3):395--417, 2004.

\bibitem{wangzhang2009}
WenDong Wang and LiQun Zhang.
\newblock The {$C^\alpha$} regularity of a class of non-homogeneous
  ultraparabolic equations.
\newblock {\em Sci. China Ser. A}, 52(8):1589--1606, 2009.

\bibitem{zhang2011}
Liqun Zhang.
\newblock The {$C^\alpha$} regularity of a class of ultraparabolic equations.
\newblock {\em Commun. Contemp. Math.}, 13(3):375--387, 2011.

\bibitem{zhu2022regularity}
Yuzhe Zhu.
\newblock Regularity of kinetic {F}okker-{P}lanck equations in bounded domains.
\newblock {\em arXiv preprint arXiv:2206.04536}, 2022.

\end{thebibliography}
\bibliographystyle{plain}
\end{document}